\newtheorem{theorem}{Theorem}[section]
\newtheorem{lemma}[theorem]{Lemma}
\newtheorem{proposition}[theorem]{Proposition}
\theoremstyle{definition}
\newtheorem{definition}[theorem]{Definition}
\theoremstyle{remark}
\newtheorem{remark}[theorem]{Remark}
\numberwithin{equation}{section}
\begin{document}

\begin{abstract}                          
In the last few years there has been a growing interest in the use of symbolic models for the formal verification and control design of purely continuous or hybrid systems. Symbolic models are abstract descriptions of continuous systems where one symbol corresponds to an "aggregate" of continuous states. In this paper we face the problem of deriving symbolic models for nonlinear control systems affected by disturbances. The main contribution of this paper is in proposing symbolic models that can be effectively constructed and that approximate nonlinear control systems affected by disturbances in the sense of alternating approximate bisimulation.
\end{abstract}

\title[Symbolic models for nonlinear control systems affected by disturbances]{Symbolic models for nonlinear control systems\\affected by disturbances}
\thanks{This work has been partially supported by the Center of Excellence for Research DEWS, University of L'Aquila, Italy.}

\author[Alessandro Borri, Giordano Pola and Maria Domenica Di Benedetto]{
Alessandro Borri$^{1}$, Giordano Pola$^{1}$ and Maria Domenica Di Benedetto$^{1}$}
\address{$^{1}$
Department of Electrical and Information Engineering, Center of Excellence DEWS,
University of L{'}Aquila, Poggio di Roio, 67040 L{'}Aquila, Italy}
\email{ \{alessandro.borri,giordano.pola,mariadomenica.dibenedetto\}@univaq.it}

\maketitle

\section{Introduction}
An emerging trend in the control systems and computer science communities is the use of symbolic models for the analysis and control design of purely continuous or hybrid systems \cite{IEEETACCCS}. Symbolic models are abstract descriptions of continuous systems where each symbol corresponds to an "aggregate" of continuous states \cite{paulo}. The use of symbolic models provides a formal approach to solve control problems in which software and hardware interact with the physical world. Moreover, it provides the designer with a systematic method to address a wide spectrum of novel specifications that are difficult to enforce by means of conventional control design paradigms. Examples of such specifications include logic specifications expressed in linear temporal logic or automata on infinite strings. \\
The literature on symbolic models is very broad and includes results on timed automata \cite{alur}, rectangular hybrid automata \cite{puri} and o-minimal hybrid systems \cite{lafferriere, brihaye}. Early results for classes of control systems were based on dynamical consistency properties \cite{caines}, natural invariants of the control system \cite{koutsoukos}, $l$-complete approximations \cite{moor} and quantized inputs and states \cite{forstner,BMP02}. Recent results include work on piecewise-affine and multi-affine systems \cite{habets,BH06}, set-oriented discretization approach for discrete-time nonlinear optimal control problem \cite{junge1} and abstractions based on convexity of reachable sets for sufficiently small sampling time \cite{gunther}. Symbolic models for nonlinear control systems, time--delay systems and switched systems based on the notions of approximate bisimulation \cite{AB-TAC07} and incremental stability \cite{IncrementalS} have been studied in \cite{PolaAutom2008,PolaSIAM2009}, \cite{PolaSCL10,PolaCDC10} and \cite{GirardTAC2010}.\\ 
In this paper we face the problem of deriving symbolic models for nonlinear control systems affected by disturbances. The presence of disturbances requires us to replace the notion of approximate bisimulation employed in \cite{PolaAutom2008,GirardTAC2010,PolaSCL10} with the notion of alternating approximate bisimulation introduced in \cite{PolaSIAM2009} and  inspired by Alur and coworkers' alternating bisimulation \cite{Alternating}. As discussed in \cite{PolaSIAM2009,paulo} this notion is a key ingredient when constructing symbolic models of systems affected by disturbances because it guarantees that control strategies synthesized on the symbolic models can be readily transferred to the original model. The existence of alternating approximately bisimilar symbolic models for incrementally stable nonlinear control systems affected by disturbances has been proven in \cite{PolaSIAM2009}. However, the results of \cite{PolaSIAM2009} cannot be easily used for the construction of symbolic models because they rely on the computation of sets of reachable states which is a difficult task in general. 
In this work we propose alternative symbolic models to the ones proposed in \cite{PolaSIAM2009} which are proven to be effectively computable. The key ingredient in our results is the derivation of finite approximations of the disturbance input functional space by resorting to spline analysis \cite{splineBook}. Spline analysis has been also employed in \cite{PolaSCL10,PolaCDC10} for deriving symbolic models of time--delay systems. As discussed in the paper, the approximation scheme proposed in \cite{PolaSCL10,PolaCDC10} cannot be used in this framework because it would lead to symbolic models that cannot be effectively constructed. For this reason in this paper we elaborate alternative spline--based approximation schemes for the disturbance input functional space which instead guarantee the effective computation of the proposed symbolic models. 
The main contribution of this paper lies in showing that: 

\bigskip
\textit{If the control system is incrementally stable and the disturbance input signals are bounded and Lipschitz continuous then symbolic models can be effectively constructed which are shown to be alternating approximately bisimilar to the original control systems with any desired accuracy}.
\bigskip

A preliminary version of this work appeared in the conference publication \cite{BorriCDC2011}. This paper is organized as follows. Preliminary definitions are recalled in Section \ref{sec2}. In Section \ref{sec3} we propose a spline--based approximation scheme for the disturbance input functional space. In Section \ref{sec4} we show how to construct symbolic models that approximate nonlinear control systems affected by disturbances in the sense of alternating approximate bisimulation. Section \ref{sec5} shows an illustrative example. Finally Section \ref{sec6} offers some concluding remarks. 

\section{Preliminary definitions}\label{sec2}

\subsection{Notation}
A singleton is a set containing exactly one element. The identity map on a set $A$ is denoted by $1_{A}$. Given two sets $A$ and $B$, if $A$ is a subset of $B$ we denote by $1_{A}:A\hookrightarrow B$ or simply by $\imath$ the natural inclusion map taking any $a\in A$ to $\imath (a)  =a\in B$. Given a function $f:A\rightarrow B$ the symbol $f(A)$ denotes the image of $A$ through $f$, i.e. $f(A):=\{b\in B:\exists a\in A$ s.t. $b=f(a)\}$; if $C\subset A$ we denote by $f|_{C}$ the restriction of $f$ to $C$, i.e. $f|_{C}(x):=f(x)$ for any $x\in C$. Given a relation $\mathcal{R}\subseteq A\times B$, the symbol $\mathcal{R}^{-1}$ denotes the inverse relation of $\mathcal{R}$, i.e. $\mathcal{R}^{-1}:=\{(b,a)\in B\times A:( a,b)\in \mathcal{R}\}$; we set $\mathcal{R}(A)=\{b\in B | \exists a\in A \text{ s.t. } (a,b)\in \mathcal{R}\}$ and $\mathcal{R}^{-1}(B)=\{a\in A | \exists b\in B \text{ s.t. } (a,b)\in \mathcal{R}\}$. The symbols $\mathbb{N}$, $\mathbb{Z}$, $\mathbb{R}$, $\mathbb{R}^{+}$ and $\mathbb{R}_{0}^{+}$ denote the set of natural, integer, real, positive real, and nonnegative real numbers, respectively. Given a vector $x\in\mathbb{R}^{n}$, we denote by $\Vert x\Vert$ the infinity norm of $x$.
Given a measurable function \mbox{$f:\mathbb{R}_{0}^{+}\rightarrow\mathbb{R}^n$}, the (essential) supremum of $f$ is denoted by $\Vert f\Vert_{\infty}$. 
Given $\mu\in\mathbb{R}^{+}$ and $A\subseteq \mathbb{R}^{n}$, we denote by $\mu A$ the set $\{b\in \mathbb{R}^{n}\,|\,\exists a\in A \text{ s.t. } b=\mu a \}$. A continuous function \mbox{$\gamma:\mathbb{R}_{0}^{+}\rightarrow\mathbb{R}_{0}^{+}$} is said to belong to class $\mathcal{K}$ if it is strictly increasing and
\mbox{$\gamma(0)=0$}; function $\gamma$ is said to belong to class $\mathcal{K}_{\infty}$ if \mbox{$\gamma\in\mathcal{K}$} and $\gamma(r)\rightarrow\infty$
as $r\rightarrow\infty$. A continuous function \mbox{$\beta:\mathbb{R}_{0}^{+}\times\mathbb{R}_{0}^{+}\rightarrow\mathbb{R}_{0}^{+}$} is said to belong to class $\mathcal{KL}$ if, for each fixed $s$, the map $\beta(r,s)$ belongs to class $\mathcal{K}_{\infty}$ with respect to $r$ and, for each fixed $r$, the map $\beta(r,s)$ is decreasing with respect to $s$ and $\beta(r,s)\rightarrow0$ as \mbox{$s\rightarrow\infty$}. The symbol $C^{0}([0,\tau];Y)$ denotes the set of continuous functions from a closed interval of the form $[0,\tau]$ with $\tau\in\mathbb{R}^{+}$ to a set $Y\subseteq \mathbb{R}^{m}$. 
Consider a bounded set $A \subseteq \mathbb{R}^n$ with interior. Let $H=[a_1,b_1]\times[a_2,b_2]\times \dots \times [a_n,b_n]$ be the smallest hyperrectangle containing $A$ and set $\hat{\mu}_{A}=\min_{i=1,2,\dots,n} (b_i-a_i)$. It is readily seen that for any $\mu \leq \hat{\mu}_A$ and any $a\in A$ there always exists $b\in (2\mu\mathbb{Z}^{n})\cap A$ such that $\Vert a-b \Vert \leq \mu$.

\subsection{Control systems and incremental stability}\label{II.B}

In this paper we consider the following nonlinear control system:
\begin{equation}
\label{control_system}
\dot{x}=f(x,u,d),
\end{equation}
where $x\in X\subseteq \mathbb{R}^{n}$ is the state, $u\in U\subseteq \mathbb{R}^{m}$ and $d\in D\subseteq \mathbb{R}^{l}$ are the control and disturbance inputs. We suppose that $f(0,0,0)=0$, the set $X$ is convex with the origin as an interior point and the sets $U$ and $D$ are compact, convex, with the origin as an interior point. Control input functions are supposed to belong to the set $\mathcal{U}$ of piecewise--constant functions of time from intervals of the form \mbox{$]a,b[\subseteq\mathbb{R}$} to $U$. Disturbance input functions are supposed to belong to the set $\mathcal{D}$ of continuous functions of time of the form \mbox{$d:]a,b[\subseteq\mathbb{R}\rightarrow D$} satisfying the following Lipschitz assumption: there exists $\kappa_{d} \in\mathbb{R}^{+}$ such that:
\begin{equation}
\Vert d(t_2)-d(t_1)\Vert \leq\kappa_{d} \vert t_2-t_1 \vert,
\label{eq:Lipschitz}
\end{equation}
for any $d\in\mathcal{D}$ and $t_1,t_2\in ]a,b[$. Function $f:\mathbb{R}^{n}\times U \times D \rightarrow\mathbb{R}^{n}$ is continuous and enjoys the following Lipschitz assumption: for every compact set \mbox{$K\subset\mathbb{R}^{n}$}, there exists a constant \mbox{$k\in\mathbb{R}^{+}$} such that 
\[
\Vert f(x,u,d)-f(y,u,d)\Vert \leq k \Vert x-y\Vert,
\]
for all $x,y\in K$, $u\in U$ and $d\in D$. In the sequel, we refer to the nonlinear control system in (\ref{control_system}) by means of the tuple:
\begin{equation}
\Sigma=(X,\mathcal{U},\mathcal{D},f),
\label{tuple}
\end{equation}
where each entity has been defined above. Since control inputs are piecewise--constant, system $\Sigma$ is often referred to in the literature as a  \textit{nonlinear sample--data control system}, see e.g. \cite{Nesic01}. \\
A curve \mbox{$\xi:]a,b[\rightarrow\mathbb{R}^{n}$} is said to be a \textit{trajectory} of $\Sigma$ if there exist $u\in\mathcal{U}$ and $d\in\mathcal{D}$ satisfying 
\[
\dot{\xi}(t)=f(\xi(t),u(t),d(t)),
\]
for almost all $t\in$ $]a,b[$. Although we have defined trajectories over open domains, we shall refer to trajectories \mbox{${\xi:}[0,\tau]\rightarrow\mathbb{R}^{n}$} defined on closed domains $[0,\tau],$ $\tau\in\mathbb{R}^{+}$ with the understanding of the existence of a trajectory \mbox{${\xi}^{\prime}:]a,b[\rightarrow\mathbb{R}^{n}$} such that \mbox{${\xi}={\xi}^{\prime}|_{[0,\tau]}$}. We also write $\xi_{xud}(t)$ to denote the point reached at time $t$ under the control input $u$ and disturbance input $d$ from initial condition $x$; this point is uniquely determined, since the assumptions on $f$ ensure existence and uniqueness of trajectories \cite{Sontag}. A control system $\Sigma$ is said to be forward complete if every trajectory is defined on an interval of the form $]a,\infty\lbrack$. Sufficient and necessary conditions for a system to be forward complete can be found in \cite{fc-theorem}. 
In the sequel, we will make use of the following stability notion.

\begin{definition}
\label{dISS}
\cite{IncrementalS} 
A control system $\Sigma$ is incrementally input--to--state stable ($\delta$--ISS) if it is forward complete and there exist a $\mathcal{KL}$ function $\beta$ and two $\mathcal{K}_{\infty}$ functions $\gamma_{u}$ and $\gamma_{d}$ such that for any $t\in{\mathbb{R}_{0}^{+}}$, any $x_{1},x_{2}\in{\mathbb{R}^{n}}$, any
$u_{1},u_{2}\in\mathcal{U}$ and any $d_{1},d_{2}\in\mathcal{D}$, the following inequality is satisfied:
\begin{equation*}
\Vert \xi_{x_{1}u_{1}d_{1}}(t)-\xi_{x_{2}u_{2}d_{2}}(t)\Vert\leq 
\beta(\Vert x_{1}-x_{2}\Vert ,t)
+\gamma_{u}(\Vert u_{1}-u_{2}\Vert_{\infty})
+\gamma_{d}(\Vert d_{1}-d_{2}\Vert_{\infty}).
\end{equation*}
\end{definition}

The above incremental stability notion can be characterized in terms of dissipation inequalities, as follows.
\begin{definition}
\label{dISS_Lyapunov}
\cite{IncrementalS} 
A smooth function $V:\mathbb{R}^{n}\times\mathbb{R}^{n}\rightarrow\mathbb{R}$ is called a $\delta$--ISS Lyapunov function for a control system $\Sigma=(X,\mathcal{U},\mathcal{D},f)$ if there exist $\lambda\in\mathbb{R}^{+}$ and $\mathcal{K}_{\infty}$ functions $\underline{\alpha}$, $\overline{\alpha}$, $\sigma_{u}$ and $\sigma_{d}$ such that for any $x_{1},x_{2}\in X$, any $u_{1},u_{2}\in U$ and any $d_{1},d_{2}\in D$ the following conditions hold true:
\begin{itemize}
\item[(i)] $\underline{\alpha}(\Vert{x_{1}-x_{2}}\Vert)\leq V(x_{1},x_{2})\leq\overline{\alpha}(\Vert{x_{1}-x_{2}}\Vert)$,
\item[(ii)] $\frac{\partial{V}}{\partial{x_{1}}} f(x_{1},u_{1},d_{1})+\frac{\partial{V}}{\partial{x_{2}}} f(x_{2},u_{2},d_{2}) \leq 
-\lambda V(x_{1},x_{2}) + \sigma_{u}(\Vert{u_{1}-u_{2}}\Vert)+\sigma_{d}(\Vert{d_{1}-d_{2}}\Vert)$.
\end{itemize}
\end{definition}
The following result adapted from \cite{IncrementalS} completely characterizes $\delta$--ISS in terms of existence of $\delta$--ISS Lyapunov functions.
\begin{theorem}
\label{TH-IISS}
The control system $\Sigma$ in (\ref{tuple}) is $\delta$--ISS if and only if it admits a $\delta$--ISS Lyapunov function.
\end{theorem}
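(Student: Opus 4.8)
The plan is to prove the two implications separately. The sufficiency direction (a $\delta$--ISS Lyapunov function implies $\delta$--ISS) is a routine comparison--lemma computation, whereas the necessity direction is a converse Lyapunov theorem and constitutes the real work; for it we follow the construction of \cite{IncrementalS}, taking care of the specific input classes $\mathcal{U}$ and $\mathcal{D}$ used here.

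\emph{Sufficiency.} Let $V$ be a $\delta$--ISS Lyapunov function with data $\lambda,\underline{\alpha},\overline{\alpha},\sigma_{u},\sigma_{d}$ as in Definition \ref{dISS_Lyapunov}. Fix $x_{1},x_{2}$, $u_{1},u_{2}\in\mathcal{U}$, $d_{1},d_{2}\in\mathcal{D}$ and set $\xi_{i}=\xi_{x_{i}u_{i}d_{i}}$. Since the $u_{i}$ are piecewise constant and the $d_{i}$ continuous, $t\mapsto v(t):=V(\xi_{1}(t),\xi_{2}(t))$ is piecewise $C^{1}$, and on each interval where $u_{1},u_{2}$ are constant condition (ii) gives $\dot v(t)\leq -\lambda v(t)+\sigma_{u}(\Vert u_{1}(t)-u_{2}(t)\Vert)+\sigma_{d}(\Vert d_{1}(t)-d_{2}(t)\Vert)$. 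Patching the pieces together and invoking the comparison lemma yields
\[
v(t)\leq \mathrm{e}^{-\lambda t}v(0)+\int_{0}^{t}\mathrm{e}^{-\lambda(t-s)}\bigl[\sigma_{u}(\Vert u_{1}(s)-u_{2}(s)\Vert)+\sigma_{d}(\Vert d_{1}(s)-d_{2}(s)\Vert)\bigr]\,ds.
\]
Bounding the integrand by its supremum, using monotonicity of $\sigma_{u},\sigma_{d}$ and condition (i), one gets $v(t)\leq \mathrm{e}^{-\lambda t}\overline{\alpha}(\Vert x_{1}-x_{2}\Vert)+\tfrac{1}{\lambda}\sigma_{u}(\Vert u_{1}-u_{2}\Vert_{\infty})+\tfrac{1}{\lambda}\sigma_{d}(\Vert d_{1}-d_{2}\Vert_{\infty})$. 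Applying $\underline{\alpha}^{-1}$ and the weak triangle inequality $\underline{\alpha}^{-1}(a+b+c)\leq\underline{\alpha}^{-1}(3a)+\underline{\alpha}^{-1}(3b)+\underline{\alpha}^{-1}(3c)$ then produces the $\delta$--ISS estimate with $\beta(r,t):=\underline{\alpha}^{-1}(3\,\mathrm{e}^{-\lambda t}\overline{\alpha}(r))\in\mathcal{KL}$, $\gamma_{u}(r):=\underline{\alpha}^{-1}(\tfrac{3}{\lambda}\sigma_{u}(r))\in\mathcal{K}_{\infty}$ and $\gamma_{d}(r):=\underline{\alpha}^{-1}(\tfrac{3}{\lambda}\sigma_{d}(r))\in\mathcal{K}_{\infty}$. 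Forward completeness (required to make the estimate global in $t$) follows by comparing $\xi_{1}$ with the zero trajectory $\xi_{2}\equiv 0$ obtained from $x_{2}=0$, $u_{2}=0$, $d_{2}=0$ (admissible since the origin is interior to $X$, $U$, $D$ and $f(0,0,0)=0$): the bound above shows $\Vert\xi_{1}(t)\Vert$ cannot escape in finite time.

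\emph{Necessity.} Assume $\Sigma$ is $\delta$--ISS. I would reduce this to a converse Lyapunov theorem for stability with respect to a closed set. Consider the auxiliary system on $\mathbb{R}^{2n}$ with state $z=(x_{1},x_{2})$ and dynamics $\dot z=F(z,w,p):=(f(x_{1},u_{1},d_{1}),f(x_{2},u_{2},d_{2}))$, where $w=(u_{1},u_{2})$, $p=(d_{1},d_{2})$, and let $\Delta=\{(x,x):x\in X\}$ be the diagonal. Since $\mathrm{dist}(z,\Delta)$ and $\Vert x_{1}-x_{2}\Vert$ differ only by a fixed multiplicative constant, the estimate of Definition \ref{dISS} says precisely that this auxiliary system is ISS with respect to $\Delta$, with gains built from $\gamma_{u},\gamma_{d}$. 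A Massera--type converse ISS--Lyapunov theorem for closed sets (as in \cite{IncrementalS} and the references therein) then yields a smooth $W:\mathbb{R}^{2n}\to\mathbb{R}$, a rate $\lambda\in\mathbb{R}^{+}$ and $\mathcal{K}_{\infty}$ functions $\underline{\alpha},\overline{\alpha},\sigma$ with $\underline{\alpha}(\mathrm{dist}(z,\Delta))\leq W(z)\leq\overline{\alpha}(\mathrm{dist}(z,\Delta))$ and $\frac{\partial W}{\partial x_{1}}f(x_{1},u_{1},d_{1})+\frac{\partial W}{\partial x_{2}}f(x_{2},u_{2},d_{2})\leq -\lambda W(z)+\sigma(\Vert(u_{1}-u_{2},d_{1}-d_{2})\Vert)$. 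Setting $V(x_{1},x_{2}):=W(x_{1},x_{2})$ gives (i) after composing $\underline{\alpha},\overline{\alpha}$ with linear scalings, and (ii) follows from the splitting $\sigma(\Vert(u_{1}-u_{2},d_{1}-d_{2})\Vert)\leq\sigma(2\Vert u_{1}-u_{2}\Vert)+\sigma(2\Vert d_{1}-d_{2}\Vert)=:\sigma_{u}(\Vert u_{1}-u_{2}\Vert)+\sigma_{d}(\Vert d_{1}-d_{2}\Vert)$. (Equivalently, one could build $V$ directly, $V(x_{1},x_{2})=\sup_{d_{1},d_{2},\,t\geq 0}\varrho(\Vert\xi_{x_{1}\bar u d_{1}}(t)-\xi_{x_{2}\bar u d_{2}}(t)\Vert)\,\mathrm{e}^{\nu t}$ for a $\mathcal{K}_{\infty}$ function $\varrho$ coming from a Sontag--type reparametrization of $\beta$ and a suitable rate $\nu>0$, check (i)--(ii) in a nonsmooth sense, and smooth.) The main obstacle is entirely here: one must show the constructed function is finite and can be made genuinely smooth (the standard but technical smoothing step), and—crucially—that the $\delta$--ISS estimate is only ever invoked for inputs in the admissible classes $\mathcal{U}$ (piecewise constant) and $\mathcal{D}$ (bounded, Lipschitz with constant $\kappa_{d}$); since the infinitesimal decrease (ii) only probes constant $u_{i},d_{i}\in U\times D$, which are admissible, this is achievable, but transferring a closed--set converse theorem—usually stated for arbitrary measurable inputs—to the present restricted setting requires care. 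I expect this bookkeeping, rather than any new idea, to be the bulk of the work, and it is in essence carried out in \cite{IncrementalS}.
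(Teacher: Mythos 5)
The paper does not prove this theorem at all: it is stated as a result ``adapted from \cite{IncrementalS}'' and used as a black box, so there is no internal proof to compare yours against. Your sketch reconstructs the standard argument of that reference, and the two halves fare differently. The sufficiency half is essentially complete and correct: the comparison-lemma estimate, the bound of the integral by $\tfrac{1}{\lambda}\sup(\cdot)$, the weak triangle inequality for $\underline{\alpha}^{-1}$, and the derivation of forward completeness by comparison with the zero trajectory (valid because the Lyapunov bound holds on the maximal interval of existence and $f(0,0,0)=0$ makes $\xi\equiv 0$ admissible) are all sound; this is the only direction the surrounding paper actually uses, since Theorem \ref{thmain} assumes a $\delta$--ISS Lyapunov function in (A1).

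The necessity half correctly identifies the right reduction --- the doubled system on $\mathbb{R}^{2n}$, ISS with respect to the diagonal $\Delta$, and a converse set-ISS Lyapunov theorem --- which is precisely the route of \cite{IncrementalS}. But it is a proof outline, not a proof: all of the substance (finiteness of the constructed function, the smoothing step, and the transfer of a converse theorem stated for measurable inputs to the restricted classes $\mathcal{U}$ and $\mathcal{D}$ of this paper, whose members are not closed under the concatenations and time shifts that converse constructions typically use) is deferred to the cited machinery. You flag this honestly, and since the paper itself defers exactly the same content to \cite{IncrementalS}, the proposal is acceptable as a justification of the theorem as used here; it would not stand alone as a self-contained proof of the converse direction.
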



\subsection{Transition systems and approximate equivalence notions}
We will use alternating transition systems \cite{Alternating} to describe both control systems as well as their symbolic models. 

\begin{definition}
An (alternating) transition system $T$ is a quintuple: 
\[
T=(Q,L,\rTo,O,H),
\]
consisting of:
\begin{itemize}
\item a set of states $Q$;
\item a set of labels $L=A\times B$, where:
\begin{itemize}
\item[--] $A$ is the set of control labels,
\item[--] $B$ is the set of disturbance labels;
\end{itemize}
\item a transition relation $\rTo \subseteq Q\times L\times Q$;
\item a set of outputs $O$;
\item an output function $H:Q\rightarrow O$. 
\end{itemize}
A transition $(q,(a,b),q^{\prime})\in\rTo$ is denoted by $q\rTo^{(a,b)}q^{\prime}$. 
A state run of $T$ is a sequence of transitions:
\begin{equation}
q_{1} \rTo^{(a_{1},b_{1})} q_{2} \rTo^{(a_{2},b_{2})} \,\, {...}\, \rTo^{(a_{N-1},b_{N-1})} q_{N} .
\label{eq:state_run}
\end{equation}
An output run is a sequence $\{o_i\}_{i\in\mathbb{N}}$ of outputs such that there exists a state run of the form (\ref{eq:state_run}) with $o_i=H(q_i)$, $i=1,2,..., N$. Transition system $T$ is said to be:
\begin{itemize}
\item \textit{countable}, if $Q$ and $L$ are countable sets;
\item \textit{symbolic}, if $Q$ and $L$ are finite sets;
\item \textit{metric}, if the output set $O$ is equipped with a metric $\mathbf{d}:O\times O\rightarrow\mathbb{R}_{0}^{+}$. 
\end{itemize}
\end{definition}

In the sequel we consider bisimulation relations \cite{Milner,Park} to relate properties of control systems and symbolic models. 
Intuitively, a bisimulation relation between a pair of transition systems $T_{1}$ and $T_{2}$ is a relation between the corresponding state sets explaining how a state run $r_{1}$ of $T_{1}$ can be transformed into a state run $r_{2}$ of $T_{2}$, and vice versa. While typical bisimulation relations require that $r_{1}$ and $r_{2}$ have the same output run, i.e. $H_{1}(r_{1}) = H_{2}(r_{2})$, the notion of approximate bisimulation relation, introduced in \cite{AB-TAC07}, relaxes this condition and require that $H_{1}(r_{1})$ is simply close to $H_{2}(r_{2})$, where closeness is measured with respect to a metric on the set of outputs. In this work we consider a generalization of approximate bisimulation, called alternating approximate bisimulation, that has been introduced in \cite{PolaSIAM2009} to relate properties of control systems affected by disturbances and their symbolic models. 

\begin{definition}
\label{ASR_S} 
Consider a pair of metric transition systems $T_1=(Q_{1},A_{1}\times B_{1},\rTo_{1},O_1,H_{1})$ and $T_2=(Q_{2},A_{2}\times B_{2},\rTo_{_2},O_2,H_{2})$ with the same set of outputs $O_1=O_2$ and metric $\mathbf{d}$ and consider a precision $\varepsilon\in\mathbb{R}_{0}^{+}$. A relation 
\[
\mathcal{R}\subseteq Q_{1}\times Q_{2}
\]
is said to be an alternating $\varepsilon$--approximate ($A\varepsilon A$) bisimulation relation between $T_{1}$ and $T_{2}$ if for all $(q_{1},q_{2})\in\mathcal{R}$ the following conditions are satisfied:

\begin{itemize}
\item[(i)]  $\mathbf{d}(H_{1}(q_{1}),H_{2}(q_{2}))\leq\varepsilon$;
\item[(ii)] $\forall a_{1}\in A_{1}$ $\exists a_{2}\in A_{2}$ $\forall b_{2}\in B_{2}$ $\exists
b_{1}\in B_{1}$ such that $q_{1}\rTo_{1}^{(a_{1},b_{1})} q_{1}^{\prime}$, $q_{2}\rTo_{2}^{(a_{2},b_{2})}q_{2}^{\prime}$ and $(q_{1}^{\prime},q_{2}^{\prime} )\in\mathcal{R}$;
\item[(iii)] $\forall a_{2}\in A_{2}$ $\exists a_{1}\in A_{1}$ $\forall b_{1}\in B_{1}$ $\exists
b_{2}\in B_{2}$ such that $q_{1}\rTo_{1}^{(a_{1},b_{1})} q_{1}^{\prime}$, $q_{2}\rTo_{2}^{(a_{2},b_{2})}q_{2}^{\prime}$ and $(q_{1}^{\prime},q_{2}^{\prime})\in\mathcal{R}$.
\end{itemize}

Transition systems $T_{1}$ and $T_{2}$ are alternating $\varepsilon$--approximately ($A\varepsilon A$) bisimilar if there exists an $A\varepsilon A$ bisimulation relation such that $\mathcal{R}(Q_{1})=Q_{2}$ and $\mathcal{R}^{-1}(Q_{2})=Q_{1}$. 
\end{definition}

As discussed in \cite{PolaSIAM2009}, the notion of alternating approximate bisimulation guarantees that control strategies synthesized on symbolic models, based on alternating approximate bisimulations, can be readily transferred to the original model, independently of the particular realization of the disturbance inputs.  When sets $B_{1}$ and $B_{2}$ are singletons, the above notion boils down to approximate bisimulation \cite{AB-TAC07}. When $\varepsilon=0$, the above notion can be viewed as the two-player version of alternating bisimulation \cite{Alternating}.

\section{Spline approximation of the disturbance space}\label{sec3}

One of the key ingredients in the results presented in this paper is the approximation of the disturbance input functional space through spline analysis \cite{splineBook}. In this section we describe this approximation scheme. 
Given a time parameter $\tau\in\mathbb{R}^{+}$, define 
\[
\mathcal{D}_{\tau}:=\{d\in\mathcal{D} | \text{ the domain of } d \text{ is } [0,\tau]\},
\]
and set
\begin{equation}
\label{emme}
M=\sup_{d\in\mathcal{D}_{\tau}}\Vert d \Vert_{\infty}.  
\end{equation}
In the sequel we propose an approximation of the functional space $\mathcal{D}_{\tau}$ in the sense of the following definition.

\begin{definition}
\label{count_approx_def}
A map 
\[
\mathcal{A}:\mathcal{\mathbb{R}}^{+}\rightarrow2^{C^{0}([0,\tau];D)}
\]
is a finite inner approximation of $\mathcal{D}_{\tau}$ if for any desired precision $\theta\in\mathcal{\mathbb{R}}^{+}$ the following properties hold:
\begin{itemize}
\item[(i)] $\mathcal{A}\left(  \theta\right)  $ is a finite set;
\item[(ii)] $\mathcal{A}\left(  \theta\right)\subseteq\mathcal{D}_{\tau}$;
\item[(iii)] for any $d\in$ $\mathcal{D}_{\tau}$ there exists $z\in\mathcal{A}(\theta)$ such that $\Vert y-z\Vert_{\infty}\leq\theta$.
\end{itemize}
\end{definition}

We start by recalling from \cite{splineBook} the notion of spline. Given $N\in \mathbb{N}$ consider the following functions:
\[
\begin{array}{rl}
s_{0}(t)= &
\left\{
\begin{array}
[c]{l}
1-t/h,\,\, t\in [0,h],\\
0,\,\, \textrm{otherwise,} 
\end{array}
\right.
\vspace{4mm}
\\
s_{i}(t)=
&
\left\{
\begin{array}
[c]{ll}
1-i+t/h, \,\, t\in [(i-1)h,ih],\\
1+i-t/h, \,\, t\in [ih,(i+1)h], \qquad i=1,2,...,N,\\
0, \,\, \textrm{otherwise,} 
\end{array}
\right.
\\
\\
\vspace{4mm}
s_{N+1}(t)=
&
\left\{
\begin{array}
[c]{ll}
1+(t-\tau)/h, \,\, t\in [\tau-h,\tau],\\
0, \,\, \textrm{otherwise,} 
\end{array}
\right.
\end{array}
\]
where $h=\tau/(N+1)$. Functions $s_{i}$ called \textit{splines} are used to approximate $\mathcal{D}_{\tau}$. More precisely, the approximation scheme that we propose is based on the following three steps: 
\begin{itemize}
\item We first scale function $d\in\mathcal{D}_{\tau}$ (Figure \ref{fig1}; first panel) to get the function $d_{1}=\rho\, d$ with:
\begin{equation*}
\rho= 1-\max\left\{\frac{\mu}{M},\frac{2\mu (N+1)}{\kappa_{d}\tau}\right\},
\end{equation*}
where $M$ is as in (\ref{emme}), $\kappa_{d}$ is as in (\ref{eq:Lipschitz}) and $\mu\in\mathbb{R}^{+}$ is a suitable quantization parameter whose role will be clear in the sequel. 
\item We then approximate function $d_{1}\in\mathcal{D}_{\tau}$ (Figure \ref{fig1}; second panel) by means of the piecewise--linear function $d_{2}$ (Figure \ref{fig1}; third panel) obtained by the linear combination of the $N+2$ splines $s_{i}$ centered at times $t=i\,h$ with amplitudes\footnote{This second step allows us to approximate the \textit{infinite}-dimensional space $\mathcal{D}_{\tau}$ by means of the \textit{finite}-dimensional space $D^{N+2}$.} $d_{1}(ih)$.
\item We finally approximate function $d_{2}$ by means of function $d_{3}$ (Figure \ref{fig1}; fourth panel) obtained by the linear combination of the $N+2$ splines $s_{i}$ centered at times $t=i\,h$ with amplitudes $d^{i}_{3}$ chosen in the lattice $[D]_{2\mu}=(2\mu\,\mathbb{Z}^{l})\cap D$ and minimizing the distance from\footnote{This third step allows us to approximate the \textit{finite}-dimensional space $D^{N+2}$ by means of the \textit{finite} set $([D]_{2\mu})^{N+2}$.} $d_{2}(ih)$, i.e.
\[
d^{i}_{3}=\arg \min_{d\in [D]_{2\mu}}\Vert d-d_{2}(ih) \Vert .
\]
\end{itemize}

\begin{figure}
\begin{center}
\includegraphics[scale=0.7]{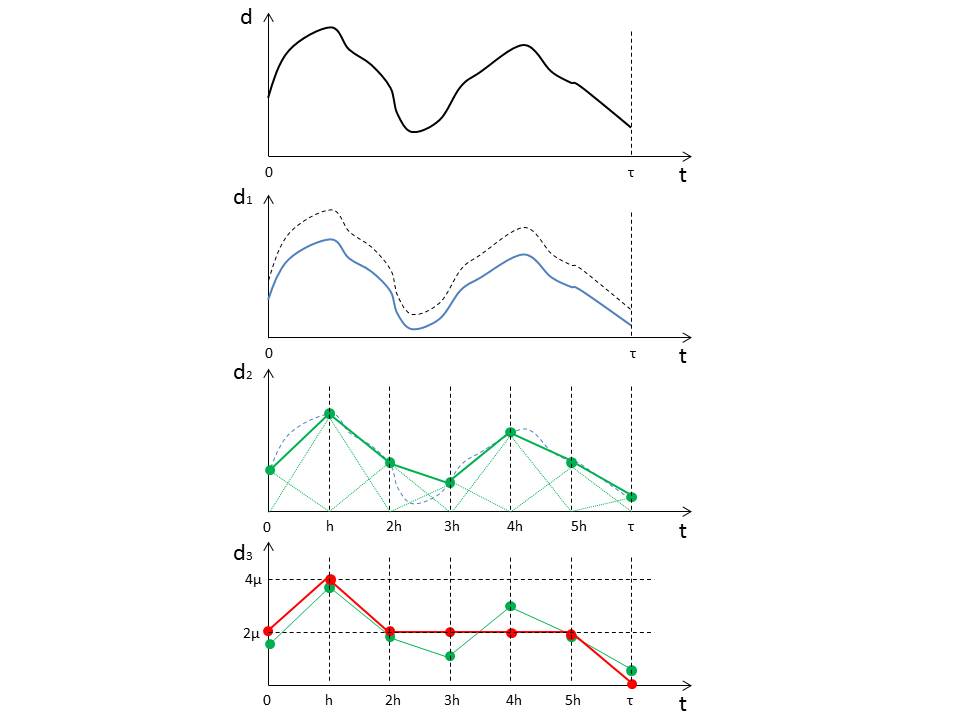}
\caption{Spline--based inner approximation scheme of the disturbance input functional space.} 
\label{fig1}
\end{center}
\end{figure}
Given $N\in\mathbb{N}$ and $\mu\in\mathbb{R}^{+}$ define the following functions:
\begin{align}
\rho_{\kappa_{d},\tau,M}(N,\mu) & = 1-\max\left\{\frac{\mu}{M},\frac{2\mu }{\kappa_{d} h}\right\},\label{Rho} \\
\Theta_{\kappa_{d},\tau,M}(N,\mu) & = (1- \rho_{\kappa_{d},\tau,M}(N,\mu))M + (1+ \rho_{\kappa_{d},\tau,M}(N,\mu)) \kappa_{d} h +\mu ,\label{Theta}
\end{align}
where we recall $h=\tau/(N+1)$. Function $\Theta$ will be shown to be an upper bound of the error associated to the approximation scheme that we propose for $\mathcal{D}_{\tau}$. The following technical result will be useful in the sequel.

\begin{lemma}
\label{inequality}
For any $\theta\in\mathbb{R}^{+}$ there exist $N\in\mathbb{N}$ and $\mu\in\mathbb{R}^{+}$ such that 
\begin{equation}
\label{condQAW}
\begin{array}
{rl}
\Theta_{\kappa_{d},\tau,M}(N,\mu)\leq \theta, \qquad \rho_{\kappa_{d},\tau,M}(N,\mu)>0.
\end{array}
\end{equation}
\end{lemma}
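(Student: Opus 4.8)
The plan is to show that both quantities can be driven to the desired values by first taking $N$ large and then taking $\mu$ small. The key observation is the hierarchical dependence of the parameters: $h = \tau/(N+1)$ shrinks as $N\to\infty$, and $\rho$ and $\Theta$ each depend on $N$ only through $h$ and on $\mu$ linearly (in the relevant regime).

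\begin{proof}
Fix $\theta\in\mathbb{R}^{+}$. We first choose $N$ and then choose $\mu$ depending on $N$.

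Observe from (\ref{Rho}) that for any fixed $N$,
\[
\lim_{\mu\rightarrow 0^{+}}\rho_{\kappa_{d},\tau,M}(N,\mu)=1,
\]
since $\max\{\mu/M,\,2\mu/(\kappa_{d}h)\}\rightarrow 0$ as $\mu\rightarrow 0^{+}$ (both $M$, $\kappa_{d}$ and $h=\tau/(N+1)$ being positive constants once $N$ is fixed). In particular there exists $\mu_{1}(N)\in\mathbb{R}^{+}$ such that $\rho_{\kappa_{d},\tau,M}(N,\mu)>0$ for all $\mu\in\,]0,\mu_{1}(N)]$; this takes care of the second inequality in (\ref{condQAW}).

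Next we examine $\Theta$. Since $0<\rho_{\kappa_{d},\tau,M}(N,\mu)\leq 1$ in the regime above, we have $1-\rho\geq 0$ and $1+\rho\leq 2$, hence from (\ref{Theta})
\[
\Theta_{\kappa_{d},\tau,M}(N,\mu)\leq\bigl(1-\rho_{\kappa_{d},\tau,M}(N,\mu)\bigr)M+2\kappa_{d}h+\mu.
\]
Now $1-\rho_{\kappa_{d},\tau,M}(N,\mu)=\max\{\mu/M,\,2\mu/(\kappa_{d}h)\}\leq \mu/M+2\mu/(\kappa_{d}h)$, so
\[
\Theta_{\kappa_{d},\tau,M}(N,\mu)\leq \mu+\frac{2\mu M}{\kappa_{d}h}+2\kappa_{d}h+\mu
=2\kappa_{d}h+\mu\Bigl(2+\frac{2M}{\kappa_{d}h}\Bigr).
\]
Choose $N\in\mathbb{N}$ large enough that $h=\tau/(N+1)$ satisfies $2\kappa_{d}h\leq\theta/2$; this is possible since $h\rightarrow 0$ as $N\rightarrow\infty$. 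With this $N$ now fixed, the coefficient $2+2M/(\kappa_{d}h)$ is a positive constant, so we may pick $\mu\in\,]0,\mu_{1}(N)]$ small enough that $\mu\bigl(2+2M/(\kappa_{d}h)\bigr)\leq\theta/2$. For this choice of $N$ and $\mu$ we obtain $\Theta_{\kappa_{d},\tau,M}(N,\mu)\leq\theta$ and, since $\mu\leq\mu_{1}(N)$, also $\rho_{\kappa_{d},\tau,M}(N,\mu)>0$, which establishes (\ref{condQAW}).
\end{proof}

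The only subtlety, and hence the main point to be careful about, is the \emph{order of quantifiers}: one cannot send $N\to\infty$ and $\mu\to 0$ independently, because the $\mu$-coefficient in $\Theta$ blows up like $1/h$, i.e. grows with $N$. The argument therefore fixes $N$ first to kill the $2\kappa_{d}h$ term, and only afterwards shrinks $\mu$; at that stage all $N$-dependent quantities are frozen constants and the remaining estimate is immediate. One should also note that $M$ is finite (it is the supremum of $\Vert d\Vert_{\infty}$ over $\mathcal{D}_{\tau}$, bounded since $D$ is compact), so all divisions by $M$ and all constants appearing above are legitimate.
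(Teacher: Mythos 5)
Your proof is correct. It reaches the same conclusion as the paper but by a different route in how it handles the two parameters: the paper collapses the problem to a single-parameter limit by fixing the coupling $\mu=(N+1)^{-2}$ and then arguing that $\rho$ increases to $1$ and an upper bound on $\Theta$ decreases to $0$ as $N\to\infty$; you instead decouple the choices, first picking $N$ so that the $2\kappa_{d}h$ term is at most $\theta/2$ and only then shrinking $\mu$ to control the remaining term $\mu\bigl(2+2M/(\kappa_{d}h)\bigr)$ and to keep $\rho>0$. Your version buys a cleaner quantifier structure (it avoids the paper's monotonicity claims, which are asserted but not checked) and makes explicit the one genuine subtlety — that the $\mu$-coefficient in $\Theta$ blows up like $1/h$, so $\mu$ must be chosen \emph{after} $N$; the paper's version buys an explicit admissible pair $(N,\mu)$ on a single curve, which is convenient if one wants a concrete parameter choice (as in the numerical example of Section 5). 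All the estimates you use — $1+\rho\leq 2$, $1-\rho\leq \mu/M+2\mu/(\kappa_{d}h)$, and the positivity and finiteness of $M$ and $\kappa_{d}$ — are valid under the paper's standing assumptions, so there is no gap.
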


\begin{proof}
Choose $\mu=\frac{1}{(N+1)^2}$, $N\in \mathbb{N}$. Function $\rho_{\kappa_{d},\tau,M}(N,\mu)$ in Eq. (\ref{Rho}) rewrites as
\begin{equation*}
\rho_{\kappa_{d},\tau,M}\left(N,\frac{1}{(N+1)^2}\right) = 1-\max\left\{\frac{1}{M (N+1)^2},\frac{2 }{\kappa_{d} \tau (N+1)}\right\}.
\end{equation*}

The right-hand side of the previous equality is increasing with $N$, and it converges to $1$ as $N$ goes to infinity; then it is clear that for a sufficiently large $N$ one gets $\rho_{\kappa_{d},\tau,M}\left(N,\frac{1}{(N+1)^2}\right)>0$.
Furthermore, one can write the following upper-bound for the function $\Theta_{\kappa_{d},\tau,M}(N,\mu)$ in (\ref{Theta}):

\begin{align*}
\Theta_{\kappa_{d},\tau,M}\left(N,\frac{1}{(N+1)^2}\right) & =\max\left\{\frac{1}{(N+1)^2},\frac{2 M}{\kappa_{d} \tau (N+1)}\right\} \\
& + \left(2-\max\left\{\frac{1}{M (N+1)^2},\frac{2 }{\kappa_{d} \tau (N+1)}\right\}\right)\frac{\kappa_{d}\tau}{N+1}+ \frac{1}{(N+1)^2} \\
& \leq \frac{1}{N+1}  \left( \max\left\{\frac{1}{N+1},\frac{2 M}{\kappa_{d} \tau}\right\} + 2 \kappa_{d} \tau + \frac{1}{N+1} \right).
\end{align*} 

The right-hand side of the previous inequality is decreasing with $N$, and goes to zero as $N$ goes to infinity. Hence, the result follows. 
\end{proof}

We are now ready to formally introduce the approximation scheme of the disturbance input functional space.

\begin{definition}
\label{countApprox}
Consider the map 
\[
\mathcal{A}_{\mathcal{D}_{\tau}}:\mathcal{\mathbb{R}}^{+}\rightarrow2^{C^{0}([0,\tau];D)}
\]
that associates to any precision $\theta\in\mathbb{R}^{+}$ the set $\mathcal{A}_{\mathcal{D}_{\tau}}(\theta)$ consisting of the collection of all functions:
\begin{equation}
z(t):=\sum_{i=0}^{N_{\theta}+1} z_{i}s_{i}(t), \hspace{5mm} t\in [0,\tau],
\label{zeta}
\end{equation}
satisfying the following conditions:
\begin{itemize}
\item[(i)] $z_{i}\in (2\mu_{\hat{\theta}}\mathbb{Z}^{l})\cap D$ for any $i=0,1,...,N_{\theta}+1$,
\item[(ii)] $\Vert z_{i+1}- z_{i} \Vert \leq \kappa_{d} \tau /(N_{\theta}+1)$ for any $i=0,1,...,N_{\theta}$,
\end{itemize}
with $\hat{\theta}=\min \{\theta,\hat{\mu}_{D}\}$ where $\hat{\mu}_{D}$ is defined in Section 2.1.
\end{definition}

\begin{remark}
\label{remark2}
Since the set $D$ is compact, the set $(2\mu_{\hat{\theta}}\mathbb{Z}^{l})\cap D$ is finite. Therefore, the set $\mathcal{A}_{\mathcal{D}_{\tau}}(\theta)$ is composed of a finite number of functions that can be effectively computed. 
\end{remark}

The following technical result will be used in the sequel.

\begin{lemma}
\label{inner}
For any $\theta\in\mathbb{R}^{+}$, $\mathcal{A}_{\mathcal{D}_{\tau}}(\theta) \subseteq \mathcal{D}_{\tau}$.
\end{lemma}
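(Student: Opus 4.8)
The plan is to take an arbitrary $z\in\mathcal{A}_{\mathcal{D}_{\tau}}(\theta)$ and verify directly the three conditions that membership in $\mathcal{D}_{\tau}$ imposes: continuity, $D$--valuedness, and the Lipschitz bound (\ref{eq:Lipschitz}) with constant $\kappa_{d}$. So fix $\theta\in\mathbb{R}^{+}$, write $h=\tau/(N_{\theta}+1)$, and let $z(t)=\sum_{i=0}^{N_{\theta}+1}z_{i}s_{i}(t)$ be as in (\ref{zeta}), with the coefficients satisfying $z_{i}\in(2\mu_{\hat{\theta}}\mathbb{Z}^{l})\cap D$ and $\Vert z_{i+1}-z_{i}\Vert\leq\kappa_{d}\tau/(N_{\theta}+1)=\kappa_{d}h$ as in Definition \ref{countApprox}. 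For $j=0,1,\dots,N_{\theta}$ put $I_{j}=[jh,(j+1)h]$; these intervals cover $[0,\tau]$ and have the interpolation nodes $jh$ as endpoints, with $0\cdot h=0$ and $(N_{\theta}+1)h=\tau$.

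The first step is to record the elementary geometry of the hat functions $s_{i}$: each is continuous, nonnegative, supported in $[(i-1)h,(i+1)h]$ (truncated at the endpoints of $[0,\tau]$ for $i=0$ and $i=N_{\theta}+1$), and on each $I_{j}$ only $s_{j}$ and $s_{j+1}$ are nonzero with $s_{j}(t)+s_{j+1}(t)=1$. This is a one--line computation on the interior intervals and a slightly more careful one on $I_{0}$ and $I_{N_{\theta}}$, where one uses $\tau/h=N_{\theta}+1$ to match the boundary splines $s_{0}$ and $s_{N_{\theta}+1}$ with their neighbours. It then follows that, for $t\in I_{j}$, $z(t)=z_{j}s_{j}(t)+z_{j+1}s_{j+1}(t)$ is a convex combination of $z_{j}$ and $z_{j+1}$; hence $z$ is continuous on $[0,\tau]$, and since $D$ is convex and $z_{j},z_{j+1}\in D$ we get $z(t)\in D$. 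Ranging over $j$ yields $z([0,\tau])\subseteq D$.

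For the Lipschitz estimate, note that on $I_{j}$ the function $z$ is affine, $z(t)=z_{j}+\tfrac{t-jh}{h}(z_{j+1}-z_{j})$, so for $t_{1},t_{2}\in I_{j}$ one has $\Vert z(t_{2})-z(t_{1})\Vert=\tfrac{|t_{2}-t_{1}|}{h}\Vert z_{j+1}-z_{j}\Vert\leq\kappa_{d}|t_{2}-t_{1}|$ by condition (ii). For arbitrary $t_{1}<t_{2}$ in $[0,\tau]$, I would insert the nodes $jh$ lying in $(t_{1},t_{2})$, apply the triangle inequality over the resulting consecutive pairs, and use that the corresponding sub--interval lengths sum to $t_{2}-t_{1}$; this gives $\Vert z(t_{2})-z(t_{1})\Vert\leq\kappa_{d}(t_{2}-t_{1})$, i.e. $z$ satisfies (\ref{eq:Lipschitz}). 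Therefore $z\in\mathcal{D}_{\tau}$, and since $z$ was an arbitrary element of $\mathcal{A}_{\mathcal{D}_{\tau}}(\theta)$ the inclusion follows. There is no genuine obstacle here: the only place requiring care is the bookkeeping for the partition--of--unity identity near the two ends of $[0,\tau]$ (the truncated splines $s_{0}$ and $s_{N_{\theta}+1}$), after which the argument is just ``convex combination of points of the convex set $D$'' together with ``a piecewise affine function with controlled slopes is globally Lipschitz''.
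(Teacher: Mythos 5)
Your proof is correct and follows essentially the same route as the paper's: piecewise linearity of the spline combination gives the Lipschitz bound on each subinterval $[ih,(i+1)h]$ from condition (ii) of Definition \ref{countApprox}, and the global bound on $[0,\tau]$ follows by inserting the nodes and applying the triangle inequality. The only difference is that you establish $z(t)\in D$ via the partition-of-unity/convexity argument, whereas the paper only verifies $\Vert z\Vert_{\infty}\leq M$ from $z_i\in D$; your version is slightly more careful on that point, but the substance is identical.
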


\begin{proof}
In order to show that any function $z$ in (\ref{zeta}) is in $\mathcal{D}_{\tau}$, we need to show that $z$ enjoys the Lipschitz condition (\ref{eq:Lipschitz}) and $\Vert z \Vert _{\infty} \leq M$. Since $z$ is continuous and defined over the interval $[0,\tau]$, by the triangle inequality it suffices to show that (\ref{eq:Lipschitz}) holds for any $t_1,t_2\in [ih,(i+1)h]$, $i=0,...,N_{\theta}$.
By Eq. (\ref{zeta}) and the definition of spline, the function $z$ is piecewise-linear and is linear in the interval $[ih,(i+1)h]$, with $z(ih)=z_i$. Hence one can write for any $t_{1},t_{2}\in [ih,(i+1)h]$:
\begin{equation}
\frac{\Vert z(t_2)-z(t_1)\Vert}{\Vert t_2-t_1\Vert }=\frac{\Vert z((i+1)h)-z(ih)\Vert}{h}=\frac{\Vert z_{i+1}-z_i)\Vert}{ h }\leq \kappa_{d},
\end{equation}
where the last step holds by condition (ii) in Definition \ref{countApprox}, concluding the proof of the Lipschitz condition. We next show that the boundedness condition holds as well.
Since $z$ is piecewise-linear, $\Vert z \Vert _{\infty}=\max_{i=0,...,N_{\theta}+1} \Vert z_i \Vert$, hence we just need to show that $\Vert z_i \Vert \leq M$ for all $i$. From condition (i) in Definition \ref{countApprox}, $z_{i}\in D$, implying from (\ref{emme}) that $\Vert z_i \Vert \leq M$, which concludes the proof.
\end{proof}
%

We are now ready to present the main result of this section.

\begin{theorem}
\label{ThSpline}
Map $\mathcal{A}_{\mathcal{D}_{\tau}}$ in Definition \ref{countApprox} is a finite inner approximation of $\mathcal{D}_{\tau}$.
\end{theorem}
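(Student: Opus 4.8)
The plan is to verify, for the map $\mathcal{A}_{\mathcal{D}_{\tau}}$, the three defining properties of a finite inner approximation listed in Definition \ref{count_approx_def}. Property (i), that $\mathcal{A}_{\mathcal{D}_{\tau}}(\theta)$ is finite, is exactly Remark \ref{remark2} (compactness of $D$ makes the lattice $(2\mu_{\hat{\theta}}\mathbb{Z}^{l})\cap D$ finite, hence only finitely many admissible amplitude vectors occur in (\ref{zeta})); property (ii), the inclusion $\mathcal{A}_{\mathcal{D}_{\tau}}(\theta)\subseteq\mathcal{D}_{\tau}$, is Lemma \ref{inner}. So the whole content is property (iii): given $\theta\in\mathbb{R}^{+}$ and an arbitrary $d\in\mathcal{D}_{\tau}$, exhibit $z\in\mathcal{A}_{\mathcal{D}_{\tau}}(\theta)$ with $\Vert d-z\Vert_{\infty}\leq\theta$, and the candidate is the function $d_{3}$ built by the three--step scheme described before Definition \ref{countApprox}.

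First I would fix the parameters. Put $\hat{\theta}=\min\{\theta,\hat{\mu}_{D}\}$ and let $N:=N_{\theta}$ and $\mu:=\mu_{\hat{\theta}}$ be the quantities given by Lemma \ref{inequality} applied with precision $\hat{\theta}$, so that $\Theta_{\kappa_{d},\tau,M}(N,\mu)\leq\hat{\theta}$ and $\rho:=\rho_{\kappa_{d},\tau,M}(N,\mu)>0$; since the $\max$ in (\ref{Rho}) is strictly positive we also have $\rho<1$, and since the other two terms of $\Theta$ in (\ref{Theta}) are strictly positive we get $\mu<\Theta_{\kappa_{d},\tau,M}(N,\mu)\leq\hat{\theta}\leq\hat{\mu}_{D}$. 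Write $h=\tau/(N+1)$. Given $d$, set $d_{1}=\rho\,d$, let $d_{2}=\sum_{i}d_{1}(ih)s_{i}$ be the piecewise--linear interpolant of $d_{1}$ at the nodes $ih$, $i=0,\dots,N+1$, and let $d_{3}=\sum_{i}d_{3}^{i}s_{i}$ with $d_{3}^{i}\in\arg\min_{v\in[D]_{2\mu}}\Vert v-d_{2}(ih)\Vert$. Since $d(ih)\in D$, $0\in D$ and $D$ is convex, $d_{1}(ih)=d_{2}(ih)=\rho\,d(ih)\in D$; since $\mu\leq\hat{\mu}_{D}$, the elementary fact recalled at the end of Section~2.1 (applied with $A=D$) guarantees a point of $[D]_{2\mu}$ within distance $\mu$ of $d_{2}(ih)$, hence $\Vert d_{3}^{i}-d_{2}(ih)\Vert\leq\mu$, which is condition (i) of Definition \ref{countApprox}. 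For condition (ii), the triangle inequality together with the Lipschitz bound (\ref{eq:Lipschitz}) for $d$ gives $\Vert d_{3}^{i+1}-d_{3}^{i}\Vert\leq 2\mu+\Vert d_{1}((i+1)h)-d_{1}(ih)\Vert\leq 2\mu+\rho\kappa_{d}h$, and by (\ref{Rho}) one has $1-\rho\geq 2\mu/(\kappa_{d}h)$, i.e. $2\mu\leq(1-\rho)\kappa_{d}h$, so $\Vert d_{3}^{i+1}-d_{3}^{i}\Vert\leq\kappa_{d}h=\kappa_{d}\tau/(N+1)$. Thus $d_{3}\in\mathcal{A}_{\mathcal{D}_{\tau}}(\theta)$.

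It then remains to estimate the error by splitting $\Vert d-d_{3}\Vert_{\infty}\leq\Vert d-d_{1}\Vert_{\infty}+\Vert d_{1}-d_{2}\Vert_{\infty}+\Vert d_{2}-d_{3}\Vert_{\infty}$ and bounding each summand by the matching term of $\Theta_{\kappa_{d},\tau,M}(N,\mu)$ in (\ref{Theta}): the first equals $(1-\rho)\Vert d\Vert_{\infty}\leq(1-\rho)M$ by (\ref{emme}); the third is at most $\mu$, since $d_{2}$ and $d_{3}$ are piecewise linear on the same grid and agree up to $\mu$ at every node; and for the middle one, on each interval $[ih,(i+1)h]$ the point $d_{2}(t)$ lies on the segment joining $d_{1}(ih)$ and $d_{1}((i+1)h)$, so $\Vert d_{1}(t)-d_{2}(t)\Vert\leq\Vert d_{1}(t)-d_{1}(ih)\Vert+\Vert d_{1}(ih)-d_{2}(t)\Vert\leq 2\rho\kappa_{d}h\leq(1+\rho)\kappa_{d}h$, using $\rho\leq 1$ in the last step. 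Summing, $\Vert d-d_{3}\Vert_{\infty}\leq\Theta_{\kappa_{d},\tau,M}(N,\mu)\leq\hat{\theta}\leq\theta$, which establishes (iii) and finishes the proof.

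I expect the only delicate point to be the bookkeeping linking the scaling factor $\rho$, the quantization step $\mu$ and the clipped precision $\hat{\theta}$: one has to notice that the particular $\max$ form of $\rho$ in (\ref{Rho}) is exactly what forces the lattice--rounded amplitudes $d_{3}^{i}$ to stay within the increment bound $\kappa_{d}\tau/(N+1)$ of Definition \ref{countApprox}(ii), and that passing to $\hat{\theta}=\min\{\theta,\hat{\mu}_{D}\}$ is exactly what makes the lattice rounding legitimate via the Section~2.1 fact. The interpolation--error estimate and the three--term triangle split are then routine.
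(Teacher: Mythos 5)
Your proposal is correct and follows essentially the same route as the paper's proof: conditions (i) and (ii) are delegated to Remark \ref{remark2} and Lemma \ref{inner}, and condition (iii) is proved by lattice-rounding the nodal values of the scaled function $\rho\,d$ and bounding the error by the three terms of $\Theta_{\kappa_{d},\tau,M}(N,\mu)\leq\hat{\theta}\leq\theta$, with the same use of the $\max$ in (\ref{Rho}) to verify the increment bound of Definition \ref{countApprox}(ii). Your justification that the rounded amplitudes lie in $D$ (via the Section~2.1 fact applied to $A=D$, using $\mu_{\hat{\theta}}\leq\hat{\mu}_{D}$) is in fact slightly cleaner than the paper's, which only checks $\Vert z_{i}\Vert\leq M$.
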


\begin{proof}
Consider any precision $\theta\in\mathbb{R}^{+}$. For notational simplicity we set $\rho_{\kappa_{d},\tau,M}(N_{\theta},\mu_{\hat{\theta}})=\rho$. As discussed in Remark \ref{remark2}, the set $\mathcal{A}_{\mathcal{D}_{\tau}}(\theta)$ is finite. Hence, condition (i) in Definition \ref{count_approx_def} is satisfied.  Condition (ii) in Definition \ref{count_approx_def} is implied by Lemma \ref{inner}. 
We now show that also condition (iii) in Definition \ref{count_approx_def} is satisfied. 
For any function $d\in\mathcal{D}_{\tau}$ consider a function $z$ as in (\ref{zeta}) where for any $i=0,1,...,N_{\theta}$ vectors $z_{i}$ are chosen in the set $2\mu_{\hat{\theta}}\mathbb{Z}^{l}$ such that:
\begin{equation}
\Vert z_{i} - \rho\,d(ih) \Vert \leq \mu_{\hat{\theta}}.
\label{gio}
\end{equation}

We first prove that vectors $z_i$ are in the set $D$, showing that $\Vert z_i \Vert_{\infty} \leq M$ for all $i$. From Eq. (\ref{gio}), by using the triangle inequality and the definition of $\rho$ in (\ref{Rho}), one can write:

\begin{align*}
\Vert z_i \Vert_{\infty} & = \Vert z_i - \rho\,d(ih) + \rho\,d(ih)  \Vert_{\infty} \\
& \leq \Vert z_i - \rho\,d(ih)  \Vert_{\infty} + \Vert \rho\,d(ih)  \Vert_{\infty} \\
& \leq \mu_{\hat{\theta}} + \Vert \rho\,d(ih)  \Vert_{\infty} \\
& \leq \mu_{\hat{\theta}} + \rho\,M \\
& \leq \mu_{\hat{\theta}} + (1-\frac{\mu_{\hat{\theta}}}{M})\,M \\
& = \mu_{\hat{\theta}} + M - \mu_{\hat{\theta}} = M,
\end{align*}
which concludes the proof of the existence of such values $z_i\in (2\mu_{\hat{\theta}}\mathbb{Z}^{l})\cap D$, as in condition (i) of Definition \ref{countApprox}. 
We now show that also condition (ii) is satisfied. From (\ref{gio}), the following chain of inequalities holds:
\[
\begin{array}
{rcl}
\Vert z_{i+1}-z_{i}\Vert & \leq & \Vert z_{i+1}-\rho d((i+1)h) \Vert + \Vert \rho d((i+1)h) - \rho d(ih)\Vert + \Vert \rho d(ih) -z_{i}\Vert \\
& \leq & \rho\Vert d((i+1)h)-d(ih)\Vert+2\mu_{\hat{\theta}}\\
& \leq & \rho\kappa_{d} h + 2\mu_{\hat{\theta}}\leq (1-\frac{2\mu_{\hat{\theta}}}{\kappa_{d} h})\kappa_{d} h + 2\mu_{\hat{\theta}}=\kappa_{d} h,
\end{array}
\]
where $h=\tau / (N_{\theta}+1)$ and the last inequality holds by the definition of function $\rho$ in (\ref{Rho}). Hence, condition (ii) in Definition \ref{countApprox} is satisfied and $z\in \mathcal{A}_{\mathcal{D}_{\tau}}(\theta)$. In order to conclude the proof of condition (iii) in Definition \ref{count_approx_def} we need to show that $\Vert d-z \Vert_{\infty}\leq \theta$. By the assumptions on the disturbance space, the following chain of inequalities holds:
\begin{align*}
\Vert d-z\Vert_{\infty} & = \max_{i=0,1,...,N_{\theta}, \, t\in [0,h]} \Vert d(ih+t)-z(ih+t) \Vert \\
& \leq \max_{i=0,1,...,N_{\theta}, \, t\in [0,h]}(\Vert d(ih+t)-\rho\,d(ih+t)\Vert 
+ \Vert \rho\,d(ih+t)-\rho\,d(ih)\Vert  \\
& \quad + \Vert\rho\,d(ih)-z(ih)\Vert + \Vert z(ih)-z(ih+t)\Vert ) \\
& \leq (1-\rho)M+(1+\rho)\kappa_{d} h+\mu_{\hat{\theta}}\\
& = \Theta_{\kappa_{d},\tau,M}(N_{\theta},\mu_{\hat{\theta}}) \leq \hat{\theta} \leq \theta,
\end{align*}
where the last step holds by Eq. (\ref{Theta}) and by definition of $N_{\theta}$ and $\mu_{\theta}$. From the above chain of inequalities, condition (iii) in Definition \ref{count_approx_def} is satisfied, which concludes the proof.
\end{proof}

\begin{remark}
Spline approximation of functional spaces has been also employed in \cite{PolaSCL10,PolaCDC10} for deriving symbolic models of nonlinear time--delay systems. The approximation scheme here proposed is different from the one proposed in \cite{PolaSCL10,PolaCDC10} as it can be readily seen by comparing Definition \ref{count_approx_def} and Definition 6 in \cite{PolaSCL10} (also employed in \cite{PolaCDC10}). In particular the notion of approximation here considered is stronger than the one used in \cite{PolaSCL10,PolaCDC10}, as it can be easily checked by comparing conditions (ii) in the two definitions. As discussed in the sequel, this notion allows us to provide symbolic models for nonlinear control systems affected by disturbances which can be \textit{effectively constructed} whereas the notion employed in \cite{PolaSCL10,PolaCDC10} does not. \\
\end{remark}

\section{Alternating approximately bisimilar symbolic models}\label{sec4}

In this section, we propose symbolic models that approximate nonlinear control systems with disturbances in the sense of alternating approximate bisimulation. \\
Given the control system $\Sigma=(X,\mathcal{U},\mathcal{D},f)$ in (\ref{tuple}) and a sampling time parameter $\tau\in{\mathbb{R}}^{+}$, consider the following transition system:
\begin{equation*}
T_{\tau}(\Sigma):=(X,\mathcal{U}_{\tau} \times \mathcal{D}_{\tau},\rTo_{\tau},O,H),
\label{systemTD}%
\end{equation*}
where:

\begin{itemize}
\item $\mathcal{U}_{\tau}=\{u\in\mathcal{U} | $ the domain of $u$ is $[0,\tau]$ and $u(t)=u(0)$, $t\in [0,\tau]$\};
\item $x\rTo_{\tau}^{(u,d)}x^{\prime}$ if there exists a trajectory $\xi:[0,\tau]\rightarrow X$ of $\Sigma$ satisfying \mbox{$\xi_{xud}(\tau)=x'$};
\item $O=X$;
\item $H=1_{X}$.
\end{itemize}

Transition system $T_{\tau}(\Sigma)$ is metric when we regard $O=X$ as being equipped with the metric \mbox{{\bfseries d}$(p,q)=\Vert p-q\Vert$}. 
Transition system $T_{\tau}(\Sigma)$ can be thought of as the time discretization of the control system $\Sigma$. For notational simplicity, in the following we denote by $u$ any constant control input $\tilde{u}$ s.t. $\tilde{u}(t)=u$ for all $t \in [0,\tau]$.
Consider a vector of quantization parameters 
\begin{equation}
\label{param_vec}
\mathbb{P}=(\tau,\mu_{x},\mu_{u},\mu_{d},N),
\end{equation}
and define the following transition system:

\begin{equation}
T_{\mathbb{P}}(\Sigma):=(Q_{\mathbb{P}},L_{\mathbb{P}},\rTo_{\mathbb{P}},O_{\mathbb{P}},H_{\mathbb{P}}), 
\label{symbmodel}
\end{equation}
where:

\begin{itemize}
\item $Q_{\mathbb{P}}=(2\mu_{x}\mathbb{Z}^{n}) \cap X$; 
\item $L_{\mathbb{P}}=A_{\mathbb{P}} \times B_{\mathbb{P}}$ where:
\begin{itemize}
\item[--] $A_{\mathbb{P}}=(2\mu_u\mathbb{Z}^{m}) \cap U$;
\item[--] $B_{\mathbb{P}}=\mathcal{A}_{\mathcal{D}_{\tau}}(\Theta_{\kappa_{d},\tau,M}(N, \mu_{d}))$ where $\mathcal{A}_{\mathcal{D}_{\tau}}$ is a finite inner approximation of $\mathcal{D}_{\tau}$, as in Definition \ref{countApprox}, and function $\Theta$ is defined as in (\ref{Theta});
\end{itemize}
\item $x\rTo_{\mathbb{P}}^{(u,d)}y$ if $\Vert \xi_{xud}(\tau) - y\Vert \leq \mu_{x}$;
\item $O_{\mathbb{P}}=X$;
\item $H_{\mathbb{P}}=\imath:Q_{\mathbb{P}}\hookrightarrow O_{\mathbb{P}}$.
\end{itemize}

\begin{remark}
It is readily seen that the transition system $T_{\mathbb{P}}(\Sigma)$ is countable and it becomes symbolic when the set of states $X$ is bounded. As stressed in Remark \ref{remark2}, the set of control and disturbance inputs $L_{\mathbb{P}}$ can be effectively computed from which the transition system $T_{\mathbb{P}}(\Sigma)$ can be effectively computed.  
\end{remark}

We now have all the ingredients to present the main result of this paper.

\begin{theorem}
\label{thmain}
Consider the control system $\Sigma=(X,\mathcal{U},\mathcal{D},f)$ in (\ref{tuple}) and suppose that:
\begin{itemize}
\item[(A1)] There exists a $\delta$--ISS Lyapunov function satisfying the inequality (ii) in Definition \ref{dISS_Lyapunov} for some $\lambda\in\mathbb{R}^{+}$.
\item[(A2)] There exists a $\mathcal{K}_{\infty}$ function $\gamma$ such that\footnote{Note that since $V$ is smooth, if the state space $X$ is bounded, which is the case as in many real applications, one can always choose $\gamma(\Vert w-z \Vert)= \left( \sup_{x,y\in X} \Vert \frac{\partial{V}}{\partial{y}}(x,y) \Vert \right) \Vert w-z \Vert$.}:
\[
V(x,x^{\prime})-V(x,x^{\prime \prime})\leq\gamma(\Vert{x^{\prime}-x^{\prime \prime}}\Vert),
\]
for every $x,x^{\prime},x^{\prime\prime}\in X$.
\end{itemize}
Then, for any desired precision $\varepsilon\in\mathbb{R}^{+}$, any sampling time $\tau\in\mathbb{R}^{+}$, and any choice of quantization parameters in $\mathbb{P}$ satisfying the following inequalities\footnote{Symbols $\hat{\mu}_X$, $\hat{\mu}_U$ and $\hat{\mu}_D$ are defined as in Section 2.1.}:
\begin{eqnarray}
&& \frac{\max \{ \sigma_{u} (\mu_{u}), \sigma_{d} (\theta_{d}) \}}{\lambda} + \frac{\gamma(\mu_{x})}{1-e^{-\lambda \tau}}\leq \underline{\alpha}(\varepsilon),\label{bisim_condition} \\
&& \mu_{x}\leq \hat{\mu}_X,\label{bisim_condition2} \\
&& \mu_{u}\leq \hat{\mu}_U,\label{bisim_condition3} \\
&& \mu_{d}\leq \hat{\mu}_D,\label{bisim_condition4} \\
&& \Theta_{\kappa_{d},\tau,M}(N,\mu_{d})\leq \theta_{d}, \label{bisim_condition5}
\end{eqnarray}
transition systems $T_{\tau}(\Sigma)$ and $T_{\mathbb{P}}(\Sigma)$ are alternating $\varepsilon$--approximately bisimilar. 
\label{polaut}
\end{theorem}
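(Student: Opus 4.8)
The plan is to exhibit an explicit $A\varepsilon A$ bisimulation relation between $T_\tau(\Sigma)$ and $T_{\mathbb{P}}(\Sigma)$ and verify the three conditions of Definition \ref{ASR_S}. The natural candidate is
\[
\mathcal{R}=\{(x_1,x_2)\in X\times Q_{\mathbb{P}}\;|\;V(x_1,x_2)\le\underline{\alpha}(\varepsilon)\}.
\]
Using Theorem \ref{TH-IISS} and assumption (A1), fix a $\delta$--ISS Lyapunov function $V$ with rate $\lambda$ and supply functions $\sigma_u,\sigma_d$, and with $\underline{\alpha},\overline{\alpha}$ as in (i) of Definition \ref{dISS_Lyapunov}. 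First I would check that $\mathcal{R}(Q_1)=Q_2$ and $\mathcal{R}^{-1}(Q_2)=Q_1$: given any $x_1\in X$, condition (\ref{bisim_condition2}) together with the lattice remark at the end of Section 2.1 guarantees a symbolic state $x_2\in Q_{\mathbb{P}}$ with $\Vert x_1-x_2\Vert\le\mu_x$, and since $\gamma(\mu_x)/(1-e^{-\lambda\tau})$ is one of the two nonnegative summands bounded by $\underline{\alpha}(\varepsilon)$ in (\ref{bisim_condition}), we have $\overline{\alpha}(\mu_x)\le\underline{\alpha}(\varepsilon)$ is not quite what is needed — instead one argues directly that $V(x_1,x_2)\le\overline{\alpha}(\Vert x_1-x_2\Vert)$; here a small amount of care is needed, but the bound on $\gamma(\mu_x)$ (noting $\gamma$ dominates the growth of $V$ in its second argument via (A2)) does the job. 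Conversely every $x_2\in Q_{\mathbb{P}}\subseteq X$ is related to itself since $V(x_2,x_2)\le\overline{\alpha}(0)=0$.

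Condition (i) of Definition \ref{ASR_S} is immediate: $(x_1,x_2)\in\mathcal{R}$ gives $\underline{\alpha}(\Vert x_1-x_2\Vert)\le V(x_1,x_2)\le\underline{\alpha}(\varepsilon)$, hence $\mathbf{d}(H_1(x_1),H_2(x_2))=\Vert x_1-x_2\Vert\le\varepsilon$. The heart of the proof is conditions (ii) and (iii). For (ii), given $(x_1,x_2)\in\mathcal{R}$ and any control label $a_1=u_1\in\mathcal{U}_\tau$ of $T_\tau(\Sigma)$, I would pick $a_2=u_2\in A_{\mathbb{P}}$ to be the nearest lattice point to $u_1$, so $\Vert u_1-u_2\Vert\le\mu_u$ by (\ref{bisim_condition3}). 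Then for any disturbance label $b_2=d_2\in B_{\mathbb{P}}=\mathcal{A}_{\mathcal{D}_\tau}(\Theta_{\kappa_d,\tau,M}(N,\mu_d))$ available in the symbolic model, I would choose $b_1=d_1$ to be a disturbance in $\mathcal{D}_\tau$ with $\Vert d_1-d_2\Vert_\infty\le\theta_d$: by Theorem \ref{ThSpline} the set $\mathcal{A}_{\mathcal{D}_\tau}$ is a finite inner approximation, so $d_2\in\mathcal{D}_\tau$ itself, and one can simply take $d_1=d_2$, getting $\Vert d_1-d_2\Vert_\infty=0\le\theta_d$. (The symmetric case (iii) works the same way: given $a_2=u_2\in A_{\mathbb{P}}$ take $u_1=u_2$; given $b_1=d_1\in\mathcal{D}_\tau$ use property (iii) of Definition \ref{count_approx_def} to find $d_2\in\mathcal{A}_{\mathcal{D}_\tau}(\Theta_{\kappa_d,\tau,M}(N,\mu_d))$ with $\Vert d_1-d_2\Vert_\infty\le\Theta_{\kappa_d,\tau,M}(N,\mu_d)\le\theta_d$ by (\ref{bisim_condition5}).) In all cases we obtain control and disturbance inputs with $\Vert u_1-u_2\Vert\le\mu_u$ and $\Vert d_1-d_2\Vert_\infty\le\theta_d$.

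It remains to show the successors stay in $\mathcal{R}$, which is the main technical step. Let $x_1'=\xi_{x_1 u_1 d_1}(\tau)$ be the $T_\tau(\Sigma)$-successor and let $x_2'\in Q_{\mathbb{P}}$ be chosen (via the lattice remark and (\ref{bisim_condition2})) so that $\Vert \xi_{x_2 u_2 d_2}(\tau)-x_2'\Vert\le\mu_x$, making $x_2\rTo_{\mathbb{P}}^{(u_2,d_2)}x_2'$ a valid transition. The standard computation is to integrate the dissipation inequality (ii) of Definition \ref{dISS_Lyapunov} along the two trajectories $\xi_{x_1u_1d_1}$ and $\xi_{x_2u_2d_2}$ over $[0,\tau]$, obtaining by Gr\"onwall-type reasoning
\[
V\big(\xi_{x_1u_1d_1}(\tau),\xi_{x_2u_2d_2}(\tau)\big)\le e^{-\lambda\tau}V(x_1,x_2)+\frac{\sigma_u(\Vert u_1-u_2\Vert)+\sigma_d(\Vert d_1-d_2\Vert_\infty)}{\lambda}\big(1-e^{-\lambda\tau}\big),
\]
and then using (A2) to pass from $\xi_{x_2u_2d_2}(\tau)$ to the quantized state $x_2'$: $V(x_1',x_2')\le V(x_1',\xi_{x_2u_2d_2}(\tau))+\gamma(\Vert \xi_{x_2u_2d_2}(\tau)-x_2'\Vert)\le V(x_1',\xi_{x_2u_2d_2}(\tau))+\gamma(\mu_x)$. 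Combining, bounding $V(x_1,x_2)\le\underline{\alpha}(\varepsilon)$, $\Vert u_1-u_2\Vert\le\mu_u$, $\Vert d_1-d_2\Vert_\infty\le\theta_d$, and using monotonicity of $\sigma_u,\sigma_d$, one arrives at
\[
V(x_1',x_2')\le e^{-\lambda\tau}\underline{\alpha}(\varepsilon)+\frac{\max\{\sigma_u(\mu_u),\sigma_d(\theta_d)\}}{\lambda}\big(1-e^{-\lambda\tau}\big)+\gamma(\mu_x),
\]
and condition (\ref{bisim_condition}), after multiplying through by $(1-e^{-\lambda\tau})$ and rearranging, gives exactly that this is $\le\underline{\alpha}(\varepsilon)$, so $(x_1',x_2')\in\mathcal{R}$. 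The main obstacle I anticipate is making the integration of the dissipation inequality rigorous: the inputs $u_1,u_2$ are constant on $[0,\tau]$ but $d_1,d_2$ vary, the trajectories are only absolutely continuous, and one must justify $\frac{d}{dt}V(\xi_1(t),\xi_2(t))\le -\lambda V(\xi_1(t),\xi_2(t))+\sigma_u(\Vert u_1-u_2\Vert)+\sigma_d(\Vert d_1(t)-d_2(t)\Vert)$ for almost every $t$ and then integrate; this is routine in the $\delta$--ISS literature but is the one place where the argument is not purely algebraic. Everything else — the nearest-lattice-point selections, the inner-approximation property, and the final arithmetic with (\ref{bisim_condition})–(\ref{bisim_condition5}) — is bookkeeping.
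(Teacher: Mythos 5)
Your proposal is correct and follows essentially the same route as the paper's proof: the same relation $\mathcal{R}=\{(x,y):V(x,y)\le\underline{\alpha}(\varepsilon)\}$, the same quantifier selections in the alternating game (nearest lattice control point and $d_1=d_2$ for condition (ii); $u_1=u_2$ and the spline approximant with error $\le\theta_d$ for condition (iii)), the same Gr\"onwall integration of the dissipation inequality followed by the $\gamma(\mu_x)$ correction from (A2), and the same final arithmetic with (\ref{bisim_condition}). The only wording to tighten is the passage from $\sigma_u(\Vert u_1-u_2\Vert)+\sigma_d(\Vert d_1-d_2\Vert_\infty)$ to $\max\{\sigma_u(\mu_u),\sigma_d(\theta_d)\}$, which is valid only because in each of the two cases one of the two mismatches is exactly zero (your choices $d_1=d_2$ and $u_1=u_2$ do guarantee this), not merely because both are bounded by $\mu_u$ and $\theta_d$ respectively.
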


Before giving the proof of the above result we stress that:

\begin{proposition}
For any desired precision $\varepsilon\in\mathbb{R}^{+}$ and any sampling time $\tau\in\mathbb{R}^{+}$, there always exists a choice of the vector $\mathbb{P}$ of 
quantization parameters such that the coupled inequalities in (\ref{bisim_condition}), (\ref{bisim_condition2}), (\ref{bisim_condition3}), (\ref{bisim_condition4}) and (\ref{bisim_condition5}) are satisfied.
\end{proposition}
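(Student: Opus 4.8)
The plan is to fix $\varepsilon\in\mathbb{R}^{+}$ and $\tau\in\mathbb{R}^{+}$ and then to produce the entries of $\mathbb{P}$ one at a time (together with the slack quantity $\theta_{d}$), distributing the budget $\underline{\alpha}(\varepsilon)>0$ on the right-hand side of (\ref{bisim_condition}) among the three terms that must be controlled. Two preliminary observations make the construction possible: $\underline{\alpha}(\varepsilon)>0$ because $\underline{\alpha}\in\mathcal{K}_{\infty}$ and $\varepsilon>0$; and $1-e^{-\lambda\tau}$ is a fixed strictly positive constant because $\lambda,\tau>0$. Moreover $\hat{\mu}_{X}$, $\hat{\mu}_{U}$, $\hat{\mu}_{D}$ are strictly positive (for $U$ and $D$ this follows from compactness together with nonempty interior; for $X$ it holds whenever $X$ is bounded, and otherwise (\ref{bisim_condition2}) imposes no constraint at all).

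\textbf{Choice of $\mu_{x}$.} Since $\gamma\in\mathcal{K}_{\infty}$ is continuous with $\gamma(0)=0$, there is $\delta_{x}>0$ with $\gamma(r)\leq\tfrac{1}{2}(1-e^{-\lambda\tau})\,\underline{\alpha}(\varepsilon)$ for all $r\leq\delta_{x}$. Setting $\mu_{x}:=\min\{\delta_{x},\hat{\mu}_{X}\}$ then gives (\ref{bisim_condition2}) and $\gamma(\mu_{x})/(1-e^{-\lambda\tau})\leq\tfrac{1}{2}\underline{\alpha}(\varepsilon)$.

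\textbf{Choice of $\mu_{u}$ and $\theta_{d}$.} Using continuity at the origin of $\sigma_{u},\sigma_{d}\in\mathcal{K}_{\infty}$, pick $\mu_{u}\in(0,\hat{\mu}_{U}]$ with $\sigma_{u}(\mu_{u})\leq\tfrac{\lambda}{2}\underline{\alpha}(\varepsilon)$ and pick $\theta_{d}\in\mathbb{R}^{+}$ with $\sigma_{d}(\theta_{d})\leq\tfrac{\lambda}{2}\underline{\alpha}(\varepsilon)$. Then $\max\{\sigma_{u}(\mu_{u}),\sigma_{d}(\theta_{d})\}/\lambda\leq\tfrac{1}{2}\underline{\alpha}(\varepsilon)$, so combined with the previous step inequality (\ref{bisim_condition}) holds, and (\ref{bisim_condition3}) holds.

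\textbf{Choice of $N$ and $\mu_{d}$.} Apply Lemma \ref{inequality} with $\theta:=\theta_{d}$ to obtain $N\in\mathbb{N}$ and $\mu_{d}\in\mathbb{R}^{+}$ with $\Theta_{\kappa_{d},\tau,M}(N,\mu_{d})\leq\theta_{d}$, i.e. (\ref{bisim_condition5}). Since the construction in the proof of that lemma uses $\mu_{d}=1/(N+1)^{2}$ with $N$ arbitrarily large, we may in addition take $N$ large enough that $\mu_{d}=1/(N+1)^{2}\leq\hat{\mu}_{D}$, which yields (\ref{bisim_condition4}). Then $\mathbb{P}=(\tau,\mu_{x},\mu_{u},\mu_{d},N)$ satisfies all five inequalities. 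I do not expect a genuine obstacle here: every term is driven to zero by sending a single scalar parameter to $0$ (or $N$ to $\infty$), and $\mathcal{K}_{\infty}$ continuity at the origin together with Lemma \ref{inequality} does all the work. The only point requiring a little care is that $\theta_{d}$ is a slack quantity coupling (\ref{bisim_condition}) with (\ref{bisim_condition5}), so it must be fixed \emph{before} invoking Lemma \ref{inequality}, and one must also check that the $\mu_{d}$ returned by that lemma can simultaneously be made $\leq\hat{\mu}_{D}$ — which is immediate from the explicit form $\mu_{d}=1/(N+1)^{2}$ used in its proof.
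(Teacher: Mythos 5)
Your proof is correct and follows essentially the same route as the paper's: shrink $\mu_{x}$, $\mu_{u}$ and the slack $\theta_{d}$ using continuity of the $\mathcal{K}_{\infty}$ functions at the origin to secure (\ref{bisim_condition})--(\ref{bisim_condition3}), then invoke Lemma \ref{inequality} with that fixed $\theta_{d}$ to obtain $N$ and $\mu_{d}$ for (\ref{bisim_condition5}) and (\ref{bisim_condition4}). If anything, you are slightly more careful than the paper on the last step: since $\Theta_{\kappa_{d},\tau,M}(N,\cdot)$ is not monotone in $\mu_{d}$ for fixed $N$, your observation that one should instead enlarge $N$ along the curve $\mu_{d}=1/(N+1)^{2}$ to get $\mu_{d}\leq\hat{\mu}_{D}$ while preserving (\ref{bisim_condition5}) is the right way to justify the paper's terser ``finally $\mu_{d}$ can be chosen small enough.''
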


\begin{proof}
It is clear that a choice of sufficiently small parameters $\mu_x$, $\mu_u$ and $\theta_d$ allows to satisfy the inequalities in (\ref{bisim_condition})-(\ref{bisim_condition3}), since $\sigma_u$, $\sigma_d$ and $\gamma$ are $\mathcal{K}_{\infty}$ functions. Then, for any fixed $\theta_d$ resulting from the previous step, one can choose $N$ and $\mu_d$ such that the inequality in (\ref{bisim_condition5}) is fulfilled (as shown in the proof of Lemma \ref{inequality}), and finally $\mu_d$ can be chosen small enough so that the inequality in (\ref{bisim_condition4}) holds. 
\end{proof}

We can now give the proof of Theorem \ref{thmain}.

\begin{proof}
Consider the relation $\mathcal{R}\subseteq X\times Q_{\mathbb{P}}$ defined by $(x,y)\in\mathcal{R}$ if and only if $V(x,y)\leq\underline{\alpha}(\varepsilon)$. 
Condition (i) in Definition \ref{ASR_S} is satisfied by the definition of $\mathcal{R}$ and condition (i) in Definition \ref{dISS_Lyapunov}. Let us now show that condition (ii) in Definition \ref{ASR_S} holds. 
Consider any $(x,y)\in\mathcal{R}$. By condition (\ref{bisim_condition3}), for any $u_{1}\in\mathcal{U}_{\tau}$ there exists $u_{2}\in A_{\mathbb{P}}=(2\mu_u\mathbb{Z}^{m}) \cap U$ such that:
\begin{equation}
\label{condU}
\Vert u_{2}-u_{1}\Vert_{\infty}\leq\mu_{u}.
\end{equation}
Moreover by Lemma \ref{inner} for any $d_{2}\in \mathcal{A}_{\mathcal{D}_{\tau}}(\theta_{d})$ we can pick $d_{1}=d_{2}\in \mathcal{D}_{\tau}$. Set $z=\xi_{yu_{2}d_{2}}(\tau)$. By condition (\ref{bisim_condition2}) there exists $v\in Q_{\mathbb{P}}$ such that: 
\begin{equation}
\Vert z- v \Vert \leq \mu_{x} .
\label{a2_1}
\end{equation}
Hence, by definition of $T_{\mathbb{P}}(\Sigma)$, the transition $y\rTo_{\mathbb{P}}^{u_{2},d_{2}}v$ is in $T_{\mathbb{P}}(\Sigma)$. Consider now the transition 
$x\rTo_{\tau}^{u_{1},d_{1}}w$ in $T_{\tau}(\Sigma)$. 
By Assumption (A1), condition (ii) in Definition \ref{dISS_Lyapunov} and the inequality in (\ref{condU}), one gets:
\[
\begin{array}
{rcl}
\frac{\partial{V}}{\partial{w}} f(w,u_{1},d_{2})+\frac{\partial{V}}{\partial{z}} f(z,u_{2},d_{2}) &
\leq & -\lambda V(w,z) + \sigma_{u}(\Vert{u_{1}-u_{2}}\Vert)+\sigma_{d}(\Vert{d_{1}-d_{2}}\Vert) \\
&
\leq & -\lambda V(w,z) + \sigma_{u}(\mu_{u}),
\end{array}
\]
which, by Assumption (A2), the definition of $\mathcal{R}$ and the inequality in (\ref{a2_1}), implies:
\begin{equation*}
\label{eq_chain_of_ineq}
\begin{split}
V(w,v) & \leq  V(w,z)+\gamma(\Vert{z-v}\Vert) \\
& \leq  V(w,z)+\gamma(\mu_{x}) \\
& \leq e^{-\lambda \tau} V(x,y)+(1-e^{-\lambda \tau})\frac{\sigma_{u} (\mu_{u})}{\lambda} + \gamma(\mu_{x}) \\
& \leq e^{-\lambda \tau} \underline{\alpha}(\varepsilon)+(1-e^{-\lambda \tau})\frac{\sigma_{u} (\mu_{u})}{\lambda} + \gamma(\mu_{x}).
\end{split}
\end{equation*}
Hence, by the inequality in (\ref{bisim_condition}), $V(w,v)\leq \underline{\alpha}(\varepsilon)$, from which $(w,v)\in\mathcal{R}$ and condition (ii) in Definition \ref{ASR_S} is proven. We now show condition (iii) in Definition \ref{ASR_S}. Consider any $(x,y)\in\mathcal{R}$. For any $u_{2}\in A_{\mathbb{P}}=(2\mu_u\mathbb{Z}^{m}) \cap U$ we can pick $u_{1}=u_{2}\in\mathcal{U}_{\tau}$. Consider any $d_{1}\in \mathcal{D}_{\tau}$. By Theorem \ref{ThSpline} and condition (\ref{bisim_condition4}) there exists 
$d_{2}\in \mathcal{A}_{\mathcal{D}_{\tau}}(\theta_{d})$ such that: 
\begin{equation}
\label{condD}
\Vert d_{2}-d_{1}\Vert_{\infty}\leq \Theta_{\kappa_{d},\tau,M}(N,\mu_{d})\leq \theta_{d}.
\end{equation}
Set $z=\xi_{yu_{2}d_{2}}(\tau)$. By condition (\ref{bisim_condition2}) there exists $v\in Q_{\mathbb{P}}$ such that the inequality in (\ref{a2_1}) holds true.
Hence, by definition of $T_{\mathbb{P}}(\Sigma)$, the transition $y\rTo_{\mathbb{P}}^{u_{2},d_{2}}v$ is in $T_{\mathbb{P}}(\Sigma)$. Consider now the transition 
$x\rTo_{\tau}^{u_{1},d_{1}}w$ in $T_{\tau}(\Sigma)$. 
By Assumption (A1), condition (ii) in Definition \ref{dISS_Lyapunov} and the inequality in (\ref{condD}), one gets:
\[
\begin{array}
{rcl}
\frac{\partial{V}}{\partial{w}} f(w,u_{1},d_{2})+\frac{\partial{V}}{\partial{z}} f(z,u_{2},d_{2}) &
\leq & -\lambda V(w,z) + \sigma_{u}(\Vert{u_{1}-u_{2}}\Vert)+\sigma_{d}(\Vert{d_{1}-d_{2}}\Vert) \\
&
\leq & -\lambda V(w,z) + \sigma_{d}(\theta_{d}),
\end{array}
\]
which, by Assumption (A2), the definition of $\mathcal{R}$ and the inequality in (\ref{a2_1}), implies:
\begin{equation*}
\begin{split}
V(w,v) & \leq  V(w,z)+\gamma(\Vert{z-v}\Vert) \\
& \leq  V(w,z)+\gamma(\mu_{x}) \\
& \leq e^{-\lambda \tau} V(x,y)+(1-e^{-\lambda \tau})\frac{\sigma_{d} (\theta_{d})}{\lambda} + \gamma(\mu_{x}) \\
& \leq e^{-\lambda \tau} \underline{\alpha}(\varepsilon)+(1-e^{-\lambda \tau})\frac{\sigma_{d} (\theta_{d})}{\lambda} + \gamma(\mu_{x}).
\end{split}
\end{equation*}
Hence, by the inequality in (\ref{bisim_condition}), $V(w,v)\leq \underline{\alpha}(\varepsilon)$, from which $(w,v)\in\mathcal{R}$ and condition (iii) in Definition \ref{ASR_S} is proven. 
Finally by definition of $\mathcal{R}$ it is easy to see that $\mathcal{R}(X)=Q_{\mathbb{P}}$ and $\mathcal{R}^{-1}(Q_{\mathbb{P}})=X$.
\end{proof}

\section{Control design of a pendulum}\label{sec5}

In this section, we consider a slight variation of the classical pendulum  model \cite{khalil} where the point mass is subject to a horizontal acceleration, modeling e.g. the wind. The resulting dynamics is described by:
\begin{equation*}
\Sigma:
\left\{
\begin{array}{clrr}
& \dot{x}_{1}=x_{2},\\
& \dot{x}_{2}=- \frac{g}{l} \sin{x_{1}}- \frac{k}{m} x_2 + \frac{1}{m l^2} u+ d \cos{x_{1}},
\end{array}
\right.
\label{example}
\end{equation*}
where $x_{1}$ and $x_{2}$ are the angular position and velocity of the point mass, $u$ is the torque representing the control variable, $d$ is the (unknown) horizontal acceleration, \mbox{$g=9.8$} is the gravity acceleration, $l=0.5$ is the length of the rod, $m=0.6$ is the mass of the bob, $k=2$ is the coefficient of friction. All constants and variables in $\Sigma$ are expressed in the International System. 
We assume $X=X_1 \times X_2$, $U=[\underline{u},\overline{u}]$ and $D=[\underline{d},\overline{d}]$, with $X_1=[-\pi/4,\pi/4]$, $X_2=[-0.5,0.5]$, $\overline{u}=-\underline{u}=1.5$, $\underline{d}=-0.01$ and $\overline{d}=0.02$. We first construct a symbolic model for $\Sigma$. To this aim we apply Theorem \ref{thmain}. As a first step, we need to show that the control system $\Sigma$ is $\delta$--ISS. Consider the following candidate quadratic $\delta$--ISS Lyapunov function:
\[
V(x,y)=(x-y)^{^{\prime}}
\begin{bmatrix}  1.5 & 0.3 \\ 0.3 & 1.5
\end{bmatrix}
(x-y) .
\]
It is possible to show that $V$ satisfies condition (i) of Definition \ref{dISS_Lyapunov} with 
\[
\begin{array}
{rcl}
\underline{\alpha}(r)=1.2\,r^2, & \overline{\alpha}(r)=3.6\, r^2, & r\in\mathbb{R}^{+}_{0}. 
\end{array}
\]
Moreover, it is possible to show that:
\[
\frac{\partial V}{\partial x_{1}}f(x_{1},u_{1},d_{1})
+\frac{\partial V}{\partial x_{2}}f(x_{2},u_{2},d_{2})  \leq -0.77\,V(x_{1},x_{2})+8.76\,\Vert u_{1}-u_{2}\Vert +1.31\,\Vert d_{1}-d_{2}\Vert,
\]
from which condition (ii) of Definition \ref{dISS_Lyapunov} is fulfilled with $\lambda=0.77$, $\sigma_u(r)=8.76\,r$ and $\sigma_d(r)=1.31\,r$, $r\in\mathbb{R}^{+}_{0}$. We consider disturbance inputs with Lipschitz constant $\kappa_{d}=0.002$. 
For a chosen precision $\varepsilon=0.125$, the inequality in (\ref{bisim_condition}) is satisfied with parameters 
\[
\begin{array}
{llll}
\tau=1, & \mu_{x}=\pi/2000, & \mu_{u}=0.001, & \theta_{d}=0.007. 
\end{array}
\]

Lemma \ref{inequality} ensures existence of parameters $\mu_{d}$ and $N$ satisfying the inequality: 
\[
\Theta_{\kappa_{d},\tau,\overline{d}}(N,\mu_d)\leq \theta_d .
\]
One possible choice of such parameters is \mbox{$\mu_d=1.43 \cdot 10^{-4}$} and $N=0$; the choice of the last parameter implies that the functional space $\mathcal{D}_{\tau}$ is approximated by two splines. 
The resulting symbolic model $T_{\mathbb{P}}(\Sigma)$ in (\ref{symbmodel}) has been constructed and consists of $159,819$ states, $1,501$ control inputs and $6,366$ disturbance inputs. The running time needed for computing $T_{\mathbb{P}}(\Sigma)$ is $4,679$s using a laptop with CPU Intel Core 2 Duo T5500 @ 1.66 GHz with 4 GB RAM. We do not report in the paper further details on $T_{\mathbb{P}}(\Sigma)$ because of its large size. 
Instead, we use the obtained symbolic model to solve the following robust control design problem with synchronization specifications on the angular position of the pendulum:
\begin{itemize}
\item starting from $x_{0}=(0,0)$, reach $\Omega_{1}=[ \pi/8, \pi/4]\times X_2$;
\item stay in $\Omega_{1}$ for a time duration between $2$s and $4$s;
\item reach $\Omega_{2}=[- \pi/4, - \pi/8]\times X_2$;
\item stay in $\Omega_{2}$ for at most $3$s;
\item go back to $\Omega_{1}$ and stay definitively in $\Omega_{1}$.
\end{itemize}
Such a specification is a simple example of more complex specifications that typically arise in multi--agent systems where (space) resources are shared in order to perform a cooperative task. 
By using standard fixed--point algorithms (see e.g. \cite{paulo}) we designed the symbolic controller enforcing the prescribed specification. The resulting controller has been constructed in $2,681s$ with a memory occupation of $716$ integers. For the disturbance input realization
\[
d(t)=\frac{\overline{d}-\underline{d}}{2}\cos\left( \frac{2\kappa_{d}}{\overline{d}-\underline{d}}t \right)+\frac{\overline{d}+\underline{d}}{2},
\]
the specification is shown in Figure \ref{state} to be satisfied, by means of the symbolic control law illustrated in Figure \ref{input}.
\begin{figure}
\begin{center}
\includegraphics[scale=0.45]{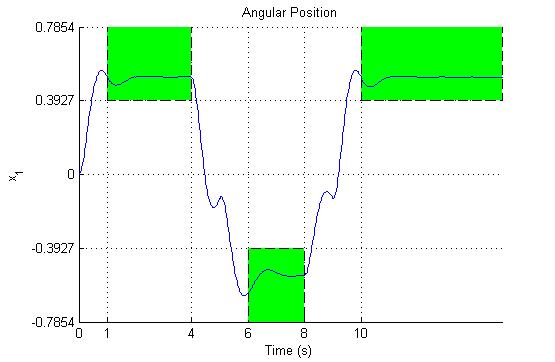}
\caption{Angular trajectory of the pendulum.} 
\label{state}
\end{center}
\end{figure}
\begin{figure}
\begin{center}
\includegraphics[scale=0.45]{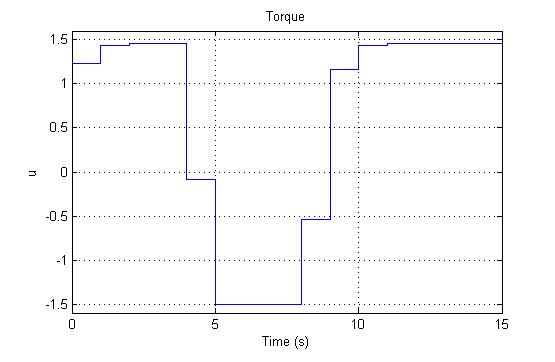}
\caption{Symbolic control input.} 
\label{input}
\end{center}
\end{figure}

\section{Conclusion}
\label{sec6}
In this paper we showed how to construct symbolic models that approximate nonlinear control systems affected by disturbances. 
Future work will focus on algorithms for the construction of the symbolic models presented in this paper. 

\bigskip

\textbf{Acknowledgment.}
The authors are grateful to Pierdomenico Pepe, Paulo Tabuada and Antoine Girard for fruitful discussions on the topic of the present paper.

\bibliographystyle{alpha}
\bibliography{biblio1,reference}

\begin{thebibliography}{AHKV98}

\bibitem[AD90]{alur}
R.~Alur and D.~L. Dill.
\newblock {\em Automata, Languages and Programming}, volume 443 of {\em Lecture
  Notes in Computer Science}, chapter Automata for modeling real-time systems,
  pages 322--335.
\newblock Springer, Berlin, April 1990.

\bibitem[AHKV98]{Alternating}
R.~Alur, T.~Henzinger, O.~Kupferman, and M.~Vardi.
\newblock Alternating refinement relations.
\newblock In {\em Proceedings of the 8th International Conference on
  Concurrence Theory}, number 1466 in Lecture Notes in Computer Science, pages
  163--178. Springer, 1998.

\bibitem[Ang02]{IncrementalS}
D.~Angeli.
\newblock A {L}yapunov approach to incremental stability properties.
\newblock {\em IEEE Transactions on Automatic Control}, 47(3):410--421, 2002.

\bibitem[AS99]{fc-theorem}
D.~Angeli and E.D. Sontag.
\newblock Forward completeness, unboundedness observability, and their
  {L}yapunov characterizations.
\newblock {\em Systems and Control Letters}, 38:209--217, 1999.

\bibitem[BH06]{BH06}
C.~Belta and L.C.G.J.M. Habets.
\newblock Controlling a class of nonlinear systems on rectangles.
\newblock {\em IEEE Transactions on Automatic Control}, 51(11):1749--1759,
  2006.

\bibitem[BM05]{brihaye}
T.~Brihaye and C.~Michaux.
\newblock On the expressiveness and decidability of o-minimal hybrid systems.
\newblock {\em Journal of Complexity}, 21(4):447--478, 2005.

\bibitem[BMP02]{BMP02}
A.~Bicchi, A.~Marigo, and B.~Piccoli.
\newblock On the rechability of quantized control systems.
\newblock {\em IEEE Transaction on Automatic Control}, April 2002.

\bibitem[BPD11]{BorriCDC2011}
A.~Borri, G.~Pola, and {M.D.} {Di Benedetto}.
\newblock Alternating approximately bisimilar symbolic models for nonlinear
  control systems affected by disturbancess.
\newblock In {\em 50th IEEE Conference on Decision and Control}, pages
  552--557, 2011.

\bibitem[CW98]{caines}
{P.E.} Caines and {Y.J.} Wei.
\newblock Hierarchical hybrid control systems: A lattice-theoretic formulation.
\newblock {\em Special Issue on Hybrid Systems, IEEE Transaction on Automatic
  Control}, 43(4):501--508, April 1998.

\bibitem[EFP06]{IEEETACCCS}
M.~Egerstedt, E.~Frazzoli, and G.~J. Pappas, editors.
\newblock {\em Special issue on symbolic methods for complex control systems},
  volume~51.
\newblock IEEE Transactions on Automatic Control, July 2006.

\bibitem[FJL02]{forstner}
D.~Forstner, M.~Jung, and J.~Lunze.
\newblock A discrete-event model of asynchronous quantised systems.
\newblock {\em Automatica}, 38:1277--1286, 2002.

\bibitem[GP07]{AB-TAC07}
A.~Girard and G.J. Pappas.
\newblock Approximation metrics for discrete and continuous systems.
\newblock {\em IEEE Transactions on Automatic Control}, 52(5):782--798, 2007.

\bibitem[GPT10]{GirardTAC2010}
A.~Girard, G.~Pola, and P.~Tabuada.
\newblock Approximately bisimilar symbolic models for incrementally stable
  switched systems.
\newblock {\em IEEE Transactions of Automatic Control}, 55(1):116--126, January
  2010.

\bibitem[HCS06]{habets}
L.C.G.J.M. Habets, P.J. Collins, and J.H.~Van Schuppen.
\newblock Reachability and control synthesis for piecewise-affine hybrid
  systems on simplices.
\newblock {\em IEEE Transactions on Automatic Control}, 51(6):938--948, 2006.

\bibitem[HKPV98]{puri}
T.A. Henzinger, P.~W. Kopke, A.~Puri, and P.~Varaiya.
\newblock What's decidable about hybrid automata?
\newblock {\em Journal of Computer and System Sciences}, 57:94--124, 1998.

\bibitem[Jun04]{junge1}
O.~Junge.
\newblock A set oriented approach to global optimal control.
\newblock {\em ESAIM: Control, optimisation and calculus of variations},
  10(2):259--270, 2004.

\bibitem[KASL00]{koutsoukos}
Xenofon~D. Koutsoukos, Panos~J. Antsaklis, James~A. Stiver, and Michael~D.
  Lemmon.
\newblock Supervisory control of hybrid systems.
\newblock {\em Proceedings of the IEEE}, 88(7):1026--1049, July 2000.

\bibitem[Kha96]{khalil}
H.K. Khalil.
\newblock {\em Nonlinear Systems}.
\newblock Prentice Hall, New Jersey, second edition, 1996.

\bibitem[LPS00]{lafferriere}
G.~Lafferriere, G.~J. Pappas, and S.~Sastry.
\newblock O-minimal hybrid systems.
\newblock {\em Math. Control Signal Systems}, 13:1--21, 2000.

\bibitem[Mil89]{Milner}
R.~Milner.
\newblock {\em Communication and Concurrency}.
\newblock Prentice Hall, 1989.

\bibitem[MRO02]{moor}
T.~Moor, J.~Raisch, and {S.D.} O'Young.
\newblock Discrete supervisory control of hybrid systems based on l-complete
  approximations.
\newblock {\em Journal of Discrete Event Dynamic Systems}, 12:83--107, 2002.

\bibitem[NT01]{Nesic01}
D.~Nesic and {A.R.} Teel.
\newblock Sampled-data control of nonlinear systems: an overview of recent
  results.
\newblock In {R.S.O.} Moheimani, editor, {\em Perspectives on Robust Control},
  pages 221--239. Springer-Verlag, 2001.

\bibitem[Par81]{Park}
D.M.R. Park.
\newblock Concurrency and automata on infinite sequences.
\newblock volume 104 of {\em Lecture Notes in Computer Science}, pages
  167--183, 1981.

\bibitem[PGT08]{PolaAutom2008}
G.~Pola, A.~Girard, and P.~Tabuada.
\newblock Approximately bisimilar symbolic models for nonlinear control
  systems.
\newblock {\em Automatica}, 44:2508--2516, October 2008.

\bibitem[PPDB10]{PolaCDC10}
G.~Pola, P.~Pepe, and M.D. Di~Benedetto.
\newblock Alternating approximately bisimilar symbolic models for nonlinear
  control systems with unknown time-varying delays.
\newblock In {\em 49th IEEE Conference on Decision and Control (CDC)}, pages
  7649 --7654, dec. 2010.

\bibitem[PPDT10]{PolaSCL10}
G.~Pola, P.~Pepe, {M.D.} {Di Benedetto}, and P.~Tabuada.
\newblock Symbolic models for nonlinear time-delay systems using approximate
  bisimulations.
\newblock {\em Systems and Control Letters}, 59:365--373, 2010.

\bibitem[PT09]{PolaSIAM2009}
G.~Pola and P.~Tabuada.
\newblock Symbolic models for nonlinear control systems: Alternating
  approximate bisimulations.
\newblock {\em SIAM Journal on Control and Optimization}, 48(2):719--733, 2009.

\bibitem[Rei09]{gunther}
G.~Rei{\ss}ig.
\newblock Computation of discrete abstractions of arbitrary memory span for
  nonlinear sampled systems.
\newblock {\em in Proc. of 12th Int. Conf. Hybrid Systems: Computation and
  Control (HSCC)}, 5469:306--320, April 2009.

\bibitem[Sch73]{splineBook}
M.~H. Schultz.
\newblock {\em Spline Analysis}.
\newblock Prentice Hall, 1973.

\bibitem[Son98]{Sontag}
E.D. Sontag.
\newblock {\em Mathematical Control Theory}, volume~6 of {\em Texts in Applied
  Mathematics}.
\newblock Springer-Verlag, New-York, 2nd edition, 1998.

\bibitem[Tab09]{paulo}
P.~Tabuada.
\newblock {\em Verification and Control of Hybrid Systems: A Symbolic
  Approach}.
\newblock Springer, 2009.

\end{thebibliography}

\end{document}